\documentclass[10pt, reqno]{amsart}
\usepackage{amsmath, amsthm, amscd, amsfonts, amssymb, graphicx, color}
\usepackage[bookmarksnumbered, colorlinks, plainpages]{hyperref}

\hypersetup{colorlinks=true,linkcolor=red, anchorcolor=green, citecolor=cyan, urlcolor=red, filecolor=magenta, pdftoolbar=true}

\newtheorem{theorem}{Theorem}[section]
\newtheorem{lemma}[theorem]{Lemma}

\newtheorem{corollary}[theorem]{Corollary}
\theoremstyle{definition}
\newtheorem{definition}[theorem]{Definition}
\newtheorem{example}[theorem]{Example}

\theoremstyle{remark}
\newtheorem{remark}[theorem]{Remark}

\numberwithin{equation}{section}

\begin{document}

\setcounter{page}{1}

\title[Generalized fractional integrals on Morrey spaces]
{Some new estimates for generalized fractional integrals associated with operators on Morrey spaces}

\author[Hua Wang]{Hua Wang}

\address{School of Mathematics and Information Science, Xiangnan University,
Chenzhou 423000, P. R. China}
\email{\textcolor[rgb]{0.00,0.00,0.84}{wanghua@pku.edu.cn}}

\dedicatory{Dedicated to the memory of Li Xue}

\subjclass[2020]{Primary 42B20, 42B35; Secondary 46E30, 47G10}

\keywords{Generalized fractional integral operator, Gaussian upper bounds, Morrey spaces, Vanishing Morrey spaces, $\mathrm{BMO}_{\mathcal{L}}(\mathbb R^n)$, $\mathrm{VMO}_{\mathcal{L}}(\mathbb R^n)$}

\date{\today}

\begin{abstract}
Let $\mathcal{L}$ be the infinitesimal generator of an analytic semigroup $\big\{e^{-t\mathcal L}:t>0\big\}$ on $L^2(\mathbb R^n)$ with Gaussian upper bounds, and suppose that $\mathcal{L}$ has a bounded holomorphic functional calculus on $L^2(\mathbb R^n)$. For given $0<\alpha<n$, let $\mathcal L^{-\alpha/2}$ be the generalized fractional integral associated with $\mathcal{L}$, which is given by
\begin{equation*}
\mathcal L^{-\alpha/2}(f)(x):=\frac{1}{\Gamma(\alpha/2)}\int_0^{+\infty}e^{-t\mathcal L}(f)(x)t^{\alpha/2-1}dt,
\end{equation*}
where $\Gamma(\cdot)$ is the usual gamma function. In the limiting Sobolev case $\lambda=n-\alpha p$ and $1\leq p<n/{\alpha}$, the author proves that the operator $\mathcal{L}^{-\alpha/2}$ is bounded from the Morrey space $M^{p,\lambda}(\mathbb R^n)$ into $\mathrm{BMO}_{\mathcal{L}}(\mathbb R^n)$, and is bounded from the vanishing Morrey space $VM^{p,\lambda}(\mathbb R^n)$ into $\mathrm{VMO}_{\mathcal{L}}(\mathbb R^n)$, where $\mathrm{BMO}_{\mathcal{L}}(\mathbb R^n)$ and $\mathrm{VMO}_{\mathcal{L}}(\mathbb R^n)$ are the spaces of bounded mean oscillation and vanishing mean oscillation associated with the operator $\mathcal{L}$, respectively. As a consequence, the author obtains that the operator $\mathcal{L}^{-\alpha/2}$ is bounded from $L^{p,\infty}(\mathbb R^n)$ into $\mathrm{BMO}_{\mathcal{L}}(\mathbb R^n)$ when $p=n/{\alpha}$ and $0<\alpha<n$. The proofs are based on pointwise kernel estimates of the operators $\mathcal L^{-\alpha/2}$ and $(I-e^{-t\mathcal L})\mathcal{L}^{-\alpha/2}$ for $0<\alpha<n$.
\end{abstract}

\maketitle

\section{Introduction and preliminaries}
\label{sec1}
Let $\mathbb R^n$ be the $n$-dimensional Euclidean space endowed with the Lebesgue measure $dx$ and the Euclidean norm $|\cdot|$. Let $\mathcal{L}$ be the infinitesimal generator of an analytic semigroup $\big\{e^{-t\mathcal L}:t>0\big\}$ on $L^2(\mathbb R^n)$ with Gaussian upper bounds on its heat kernel, and suppose that $\mathcal{L}$ has a bounded holomorphic functional calculus on $L^2(\mathbb R^n)$.  Then $\mathcal{L}$ is the linear operator on $L^2(\mathbb R^n)$ which generates an analytic semigroup $\big\{e^{-t\mathcal L}:t>0\big\}$ with kernel $\mathcal{P}_t(x,y)$ satisfying
\begin{equation*}
e^{-t\mathcal L}(f)(x):=\int_{\mathbb R^n}\mathcal{P}_t(x,y)f(y)\,dy, \quad t>0,
\end{equation*}
and there exist two positive constants $C$ and $A$ such that
\begin{equation}\label{G}
\big|\mathcal{P}_t(x,y)\big|\leq\frac{C}{t^{n/2}}\cdot e^{-A\frac{|x-y|^2}{t}}
\end{equation}
holds for all $x,y\in\mathbb R^n$ and all $t>0$. For any $0<\alpha<n$, the generalized fractional integral $\mathcal L^{-\alpha/2}$ associated with the operator $\mathcal{L}$ is defined by
\begin{equation}\label{gefrac}
\mathcal L^{-\alpha/2}(f)(x):=\frac{1}{\Gamma(\alpha/2)}\int_0^{+\infty}e^{-t\mathcal L}(f)(x)t^{\alpha/2-1}dt,\quad x\in\mathbb R^n.
\end{equation}
Let $\Delta$ be the Laplacian operator on $\mathbb R^n$, that is,
\begin{equation*}
\Delta:=\frac{\partial^2}{\partial x_1^2}+\frac{\partial^2}{\partial x_2^2}+\cdots+\frac{\partial^2}{\partial x_n^2}.
\end{equation*}
Note that if $\mathcal{L}=-\Delta$ is the Laplacian operator on $\mathbb R^n$, then $\mathcal L^{-\alpha/2}$ is exactly the classical fractional integral(or Riesz potential operator) $\mathcal{I}_{\alpha}$ of order $\alpha$($0<\alpha<n$), which is given by
\begin{equation*}
\mathcal{I}_{\alpha}(f)(x):=\frac{1}{\gamma(\alpha)}\int_{\mathbb R^n}\frac{f(y)}{|x-y|^{n-\alpha}}dy,\quad x\in\mathbb R^n,
\end{equation*}
where $\gamma(\alpha):=\frac{2^{\alpha}\pi^{n/2}\Gamma(\alpha/2)}{\Gamma({(n-\alpha)}/2)}$ and $\Gamma(\cdot)$ being the usual gamma function.

\begin{example}
The Gaussian upper bound \eqref{G} is satisfied by a large class of differential operators. Some of them are listed below:
\begin{enumerate}
\item Let $0\leq V(x)\in L^1_{\mathrm{loc}}(\mathbb R^n)$. The Schr\"{o}dinger operator with nonnegative potential $V(x)$ is defined by
\begin{equation*}
\mathcal{L}_1=-\Delta+V \quad \mbox{on}~~ \mathbb R^n.
\end{equation*}
The operator $\mathcal{L}_1$ is a self-adjoint positive definite operator. Hence it has a bounded functional calculus on $L^2(\mathbb R^n)$. By the well-known Feynman--Kac formula, we can see that the kernel $\mathcal{P}_t(x,y)$ of the semigroup $\big\{e^{-t\mathcal L_1}:t>0\big\}$ satisfies the following pointwise estimate
\begin{equation*}
0\leq \mathcal{P}_t(x,y)\leq\frac{1}{(4\pi t)^{n/2}}e^{-\frac{|x-y|^2}{4t}}.
\end{equation*}
That is, the Gaussian upper bound \eqref{G} is satisfied. For some more interesting results on the generalized fractional integral $\mathcal{L}_1^{-\alpha/2}$ generated by the operator $\mathcal{L}_1$,
we refer the reader to \cite{ak}, \cite{bui}, \cite{tang}, \cite{wang} and the references therein.
\item Let $\mathcal{A}=\mathcal{A}(x)$ be an $n\times n$ matrix of complex $L^{\infty}$-coefficients defined on $\mathbb R^n$. For $\xi=(\xi_1,\xi_2,\dots,\xi_n)\in \mathbb{C}^n$, we denote its complex conjugate $(\bar{\xi_1},\bar{\xi_2},\dots,\bar{\xi_n})$ by $\bar{\xi}$. Assume that this matrix satifies the following ellipticity (or ``accretivity") condition:
\begin{equation*}
\lambda|\xi|^2\leq \mathrm{Re}\, \mathcal{A}\xi\cdot\bar{\xi}\quad \mbox{and} \quad |\mathcal{A}\xi\cdot\bar{\zeta}|\leq\Lambda|\xi||\zeta|
\end{equation*}
for all $\xi,\zeta\in \mathbb{C}^n$ and for some constants $\lambda,\Lambda$ such that $0<\lambda\leq\Lambda<\infty$. Here we use the inner product notation
\begin{equation*}
\xi\cdot\bar{\zeta}=\xi_1\bar{\zeta_1}+\xi_2\bar{\zeta_2}+\cdots+\xi_n\bar{\zeta_n}.
\end{equation*}
Then we define the second-order divergence form operator as
\begin{equation*}
\mathcal{L}_2=-\mathrm{div}(\mathcal{A}\nabla)\quad \mbox{on}~~ \mathbb R^n,
\end{equation*}
which we interpret in the standard weak sense of maximal accretive operators via a sesquilinear form. The maximal accretivity condition ensures the existence of an analytic contraction semigroup on $L^2(\mathbb R^n)$ generated by $-\mathcal{L}_2$. In this setting, the complex elliptic operator $\mathcal{L}_2$ also has a bounded holomorphic functional calculus on $L^2(\mathbb R^n)$. It is known that when $\mathcal{A}$ has real entries, or when $n=1,2$ in the case of complex entries, the operator $\mathcal{L}_2$ generates an analytic semigroup $\big\{e^{-t\mathcal L_2}:t>0\big\}$ with heat kernel satisfying the Gaussian upper bound \eqref{G}(see, for example, \cite{d} and \cite{duong1}).
In general, when $n\geq3$ in the case of complex entries, the property
\eqref{G} on the heat kernel does not hold. For some more results on the generalized fractional integral $\mathcal{L}_2^{-\alpha/2}$ generated by the operator $\mathcal{L}_2$, we refer the reader to \cite{auscher2}, \cite{auscher}, \cite{chen} and the references therein.
\end{enumerate}
\end{example}
Since the semigroup $\big\{e^{-t\mathcal L}:t>0\big\}$ has a kernel $\mathcal{P}_t(x,y)$ which satisfies the Gaussian upper bound \eqref{G}, it is easy to check that there exists a constant $C>0$ such that
\begin{equation}\label{dominate1}
\big|\mathcal L^{-\alpha/2}(f)(x)\big|\leq C\cdot \mathcal{I}_{\alpha}(|f|)(x)
\end{equation}
holds for all $x\in\mathbb R^n$. In fact, if we denote the kernel of $\mathcal L^{-\alpha/2}$ by $\mathcal K_{\alpha}(x,y)$, then it follows from \eqref{gefrac} and Fubini's theorem that
\begin{equation*}
\begin{split}
\int_{\mathbb R^n}\mathcal K_{\alpha}(x,y)f(y)\,dy
&=\mathcal{L}^{-\alpha/2}(f)(x)\\
&=\frac{1}{\Gamma(\alpha/2)}\int_0^{+\infty}e^{-t\mathcal L}(f)(x)t^{\alpha/2-1}dt\\
&=\int_0^{+\infty}\bigg\{\frac{1}{\Gamma(\alpha/2)}\int_{\mathbb R^n}\mathcal{P}_t(x,y)f(y)\,dy\bigg\}t^{\alpha/2-1}dt\\
&=\int_{\mathbb R^n}\bigg\{\frac{1}{\Gamma(\alpha/2)}\int_0^{+\infty}\mathcal{P}_t(x,y)t^{\alpha/2-1}dt\bigg\}f(y)\,dy.
\end{split}
\end{equation*}
Then we have
\begin{equation}\label{kgamma}
\mathcal K_{\alpha}(x,y)=\frac{1}{\Gamma(\alpha/2)}\int_0^{+\infty}\mathcal{P}_t(x,y)t^{\alpha/2-1}dt,
\end{equation}
where $\mathcal{P}_t(x,y)$ is the kernel of $e^{-t\mathcal L}$. Thus, by using the Gaussian upper bound \eqref{G} and the expression \eqref{kgamma}, we can deduce that for any $0<\alpha<n$,
\begin{align}\label{kernelk}
\big|\mathcal K_{\alpha}(x,y)\big|
&\leq\frac{1}{\Gamma(\alpha/2)}\int_0^{+\infty}\big|\mathcal{P}_t(x,y)\big|t^{\alpha/2-1}dt\notag\\
&\leq C\cdot\int_0^{+\infty} e^{-A\frac{|x-y|^2}{t}}\cdot t^{\alpha/2-n/2-1}dt\notag\\
&\leq C\cdot\frac{1}{|x-y|^{n-\alpha}}\int_0^{+\infty} e^{-\nu}\cdot\nu^{n/2-\alpha/2-1}d\nu\notag\\
&\leq C\cdot\frac{1}{|x-y|^{n-\alpha}}.
\end{align}
This proves \eqref{dominate1} with $C>0$ independent of $f$ (see \cite{duong1} and \cite{mo}). Recall that, for any given $1\leq p<\infty$, the space $L^p(\mathbb R^n)$ is defined as the set of all integrable functions $f$ on $\mathbb R^n$ such that
\begin{equation*}
\|f\|_{L^p}:=\bigg(\int_{\mathbb R^n}|f(x)|^pdx\bigg)^{1/p}<+\infty,
\end{equation*}
and the weak space $L^{p,\infty}(\mathbb R^n)$ is defined as the set of all measurable functions $f$ on $\mathbb R^n$ such that
\begin{equation*}
\|f\|_{L^{p,\infty}}:=\sup_{\lambda>0}\lambda\cdot m\big(\big\{x\in\mathbb R^n:|f(x)|>\lambda\big\}\big)^{1/p}<+\infty.
\end{equation*}
In the sequel, for a Lebesgue measurable set $E\subset\mathbb R^n$, the $n$-dimensional Lebesgue measure of $E$ is denoted by $m(E)$. For the case $p=\infty$, $L^{\infty,\infty}(\mathbb R^n)$ will be taken to mean $L^\infty(\mathbb R^n)$, which is defined as the set of all measurable functions $f$ on $\mathbb R^n$ such that
\begin{equation*}
\|f\|_{L^\infty}:=\underset{x\in\mathbb R^n}{\mbox{ess\,sup}}\,|f(x)|<+\infty.
\end{equation*}
A real-valued locally integrable function $f$ on $\mathbb R^n$ is said to be in $\mathrm{BMO}(\mathbb R^n)$, the space of bounded mean oscillation, if
\begin{equation}\label{bmonorm}
\|f\|_{\mathrm{BMO}}:=\sup_{\mathcal{B}\subset\mathbb R^n}\frac{1}{m(\mathcal{B})}\int_{\mathcal{B}}|f(x)-f_{\mathcal{B}}|\,dx<+\infty,
\end{equation}
where the supremum is taken over all balls $\mathcal{B}$ contained in $\mathbb R^n$, and $f_{\mathcal{B}}$ denotes the mean value of $f$ over the ball $\mathcal{B}$, that is,
\begin{equation*}
f_{\mathcal{B}}:=\frac{1}{m(\mathcal{B})}\int_{\mathcal{B}}f(y)\,dy,
\end{equation*}
and $m(\mathcal{B})$ is the Lebesgue measure of $\mathcal{B}$. Modulo constant functions, the functional in \eqref{bmonorm} is a norm under which the space BMO is a Banach space. It is well known that for any $1<p<\infty$,
\begin{equation*}
\|f\|_{\mathrm{BMO}}\leq \bigg(\frac{1}{m(\mathcal{B})}\int_{\mathcal{B}}|f(x)-f_{\mathcal{B}}|^pdx\bigg)^{1/p}\leq C\|f\|_{\mathrm{BMO}}
\end{equation*}
with $C>0$ depending only on the dimension $n$ and $p$. The proof is based on H\"{o}lder's inequality and the famous John--Nirenberg's inequality(see \cite{john}). Let $\mathrm{BMO}^p(\mathbb R^n)$ be the set of all locally integrable functions $f$ satisfying
\begin{equation*}
\|f\|_{\mathrm{BMO}^p}:=\sup_{\mathcal{B}\subset\mathbb R^n}\bigg(\frac{1}{m(\mathcal{B})}\int_{\mathcal{B}}|f(x)-f_{\mathcal{B}}|^pdx\bigg)^{1/p}<+\infty.
\end{equation*}
Then the space $\mathrm{BMO}^p(\mathbb R^n)$ coincides with $\mathrm{BMO}(\mathbb R^n)$ and the corresponding norms are equivalent.
According to Sarason \cite{sarason}, a function $f\in \mathrm{BMO}(\mathbb R^n)$ is said to be in $\mathrm{VMO}(\mathbb R^n)$, the space of vanishing mean oscillation, if it satisﬁes the limiting condition
\begin{equation*}
\lim_{r\to 0^{+}}\sup_{x_0\in\mathbb R^n}\frac{1}{m(\mathcal{B})}\int_{\mathcal{B}}|f(x)-f_{\mathcal{B}}|\,dx=0,
\end{equation*}
where $\mathcal{B}=B(x_0,r)$ denotes the open ball centered at $x_0\in\mathbb R^n$ with radius $r>0$. For brevity, in the sequel we use the notation
\begin{equation*}
\eta(f;r):=\sup_{x_0\in\mathbb R^n}\frac{1}{m(\mathcal{B})}\int_{\mathcal{B}}|f(x)-f_{\mathcal{B}}|\,dx,
\end{equation*}
for any $r>0$, when $f\in \mathrm{BMO}(\mathbb R^n)$. $\eta(f;r)$ is called the VMO modulus of $f$. Note that when $f\in \mathrm{BMO}(\mathbb R^n)$ and $\mathcal{B}=B(x_0,r)$,
\begin{equation*}
\|f\|_{\mathrm{BMO}}=\sup_{x_0\in\mathbb R^n,r>0}\frac{1}{m(\mathcal{B})}\int_{\mathcal{B}}|f(x)-f_{\mathcal{B}}|\,dx<+\infty.
\end{equation*}
When $f\in \mathrm{VMO}(\mathbb R^n)$,
\begin{equation*}
\lim_{r\to 0^{+}}\eta(f;r)=0.
\end{equation*}
It is well known that the classical fractional integral operator $I_{\alpha}$ of order $\alpha$ plays an important role in harmonic analysis, potential theory and PDEs, particularly in the study of smoothness properties of functions. Let $0<\alpha<n$ and $1<p<q<\infty$. The classical Hardy--Littlewood--Sobolev theorem states that $\mathcal{I}_{\alpha}$ is bounded from $L^p(\mathbb R^n)$ to $L^q(\mathbb R^n)$, where $1<p<n/{\alpha}$ and $1/q=1/p-\alpha/n$. However, in the limiting Sobolev case $p=n/{\alpha}$ and $0<\alpha<n$, $\mathcal{I}_{\alpha}$ is NOT bounded from $L^p(\mathbb R^n)$ to $L^{\infty}(\mathbb R^n)$. Instead, in this case, it is actually bounded from $L^{n/{\alpha}}(\mathbb R^n)$ into $\mathrm{BMO}(\mathbb R^n)$(see, for example, \cite{grafakos} and \cite{stein}). In view of \eqref{dominate1}, we can generalize the above classical result from $-\Delta$ to the more general operator $\mathcal{L}$ with Gaussian upper bound \eqref{G}, and obtain the $(L^p,L^q)$ boundedness of the operator $\mathcal L^{-\alpha/2}$ under the same conditions.

On the other hand, the classical Morrey space ${M}^{p,\lambda}(\mathbb R^n)$ was introduced by Morrey to study
the local behavior of solutions to second order elliptic partial differential equations. For the properties and applications of classical Morrey spaces, we refer the reader to the books \cite{adams1}, \cite{hakim1} and \cite{hakim2}. Let us now recall the definitions of the Morrey space and vanishing Morrey space.
\begin{definition}
Let $n\in \mathbb{N}$, $1\leq p<\infty$ and $0\leq\lambda\leq n$. We say that a locally integrable function $f$ on $\mathbb R^n$ belongs to the Morrey space $M^{p,\lambda}(\mathbb R^n)$, if
\begin{equation*}
\begin{split}
\big\|f\big\|_{{M}^{p,\lambda}}:=&\sup_{\mathcal{B}\subset\mathbb R^n}\frac{1}{m(\mathcal{B})^{\lambda/{pn}}}
\bigg(\int_{\mathcal{B}}|f(x)|^p\,dx\bigg)^{1/{p}}\\
=&\sup_{\mathcal{B}\subset\mathbb R^n}\frac{1}{m(\mathcal{B})^{\lambda/{pn}}}
\big\|f\cdot\chi_{\mathcal{B}}\big\|_{L^p}<+\infty.
\end{split}
\end{equation*}
\end{definition}
\begin{remark}
The space $M^{p,\lambda}(\mathbb R^n)$ becomes a Banach space with the norm $\|\cdot\|_{{M}^{p,\lambda}}$. Moreover, for $\lambda=0$, the Morrey space $M^{p,0}(\mathbb R^n)$ coincides with the Lebesgue space $L^p(\mathbb R^n)$. Meanwhile, if $0<\lambda<n$ and $1\leq q<p$, then $M^{q,\lambda}(\mathbb R^n)$ is strictly larger than $L^p(\mathbb R^n)$ when $1-q/p=\lambda/n$. For instance, it can be shown that the function $f(x):=|x|^{-n/p}$ belongs to $M^{q,\lambda}(\mathbb R^n)$, but it is clearly not in $L^p(\mathbb R^n)$. For $\lambda=n$, we have $M^{p,n}(\mathbb R^n)=L^\infty(\mathbb R^n)$ by the Lebesgue differentiation theorem.
\end{remark}
Let $\mathcal{B}=B(x_0,r)$ be a ball centered at $x_0\in \mathbb R^n$ and with radius $r>0$. Note that
\begin{equation*}
\big\|f\big\|_{{M}^{p,\lambda}}\approx\sup_{x_0\in \mathbb R^n,r>0}\frac{1}{r^{\lambda/{p}}}
\bigg(\int_{\mathcal{B}}|f(x)|^p\,dx\bigg)^{1/{p}}
=\sup_{x_0\in \mathbb R^n,r>0}\bigg(\frac{1}{r^{\lambda}}\int_{\mathcal{B}}|f(x)|^p\,dx\bigg)^{1/{p}}.
\end{equation*}
For simplicity, in the sequel we use the notation
\begin{equation*}
\mathcal{M}_{p,\lambda}(f;x_0,r):=\frac{1}{m(\mathcal{B})^{\lambda/{pn}}}
\bigg(\int_{\mathcal{B}}|f(x)|^p\,dx\bigg)^{1/p}.
\end{equation*}
\begin{definition}
Let $n\in \mathbb{N}$, $1\leq p<\infty$ and $0<\lambda<n$. The vanishing Morrey space $VM^{p,\lambda}(\mathbb R^n)$ is defined as the set of all functions $f\in M^{p,\lambda}(\mathbb R^n)$ satisfying the limiting condition
\begin{equation*}
\lim_{r\to 0^{+}}\sup_{x_0\in \mathbb R^n}\mathcal{M}_{p,\lambda}(f;x_0,r)=0.
\end{equation*}
\end{definition}
Note that
\begin{equation*}
\big\|f\big\|_{{M}^{p,\lambda}}=\sup_{x_0\in \mathbb R^n,r>0}\mathcal{M}_{p,\lambda}(f;x_0,r).
\end{equation*}
The vanishing Morrey space $VM^{p,\lambda}(\mathbb R^n)$ is a Banach space with respect to the norm
\begin{equation*}
\big\|f\big\|_{VM^{p,\lambda}}=\big\|f\big\|_{{M}^{p,\lambda}}=\sup_{x_0\in \mathbb R^n,r>0}\mathcal{M}_{p,\lambda}(f;x_0,r).
\end{equation*}
The vanishing Morrey space $VM^{p,\lambda}(\mathbb R^n)$ was introduced by Vitanza in \cite{vitanza}, where applications to PDEs were considered. Let $1\leq p<\infty$ and $0<\lambda<n$. Then we have
\begin{equation*}
L^{\infty}\cap M^{p,\lambda}\subset VM^{p,\lambda}.
\end{equation*}
In fact, for any ball $\mathcal{B}=B(x_0,r)$ centered at $x_0$ and with radius $r>0$ and $f\in L^{\infty}(\mathbb R^n)\cap M^{p,\lambda}(\mathbb R^n)$, it is easy to see that
\begin{equation*}
\begin{split}
\sup_{x_0\in \mathbb R^n}\mathcal{M}_{p,\lambda}(f;x_0,r)
&\leq\|f\|_{L^{\infty}}\cdot\frac{m(\mathcal{B})^{1/p}}{m(\mathcal{B})^{\lambda/{pn}}}\\
&\lesssim\|f\|_{L^{\infty}}\cdot r^{{(n-\lambda)}/p}.
\end{split}
\end{equation*}
Observe that $n-\lambda>0$. Hence,
\begin{equation*}
\lim_{r\to 0^{+}}\sup_{x_0\in \mathbb R^n}\mathcal{M}_{p,\lambda}(f;x_0,r)=0.
\end{equation*}
This gives $f\in VM^{p,\lambda}(\mathbb R^n)$. Moreover, it can be shown that $VM^{p,\lambda}(\mathbb R^n)$ is a closed subspace of $M^{p,\lambda}(\mathbb R^n)$, we conclude that
\begin{equation*}
\overline{L^{\infty}\cap M^{p,\lambda}}\subset VM^{p,\lambda}.
\end{equation*}
One can see \cite{ak}, \cite{ragusa} and \cite{Samko} for further details and examples of vanishing Morrey spaces.
\begin{example}
Some examples of BMO and VMO functions are listed below:
\begin{enumerate}
	\item Let $2\leq n\in \mathbb{N}$. Define the homogeneous Sobolev space $\dot{W}^{1,n}(\mathbb R^n)$ as follows.
\begin{equation*}
\dot{W}^{1,n}(\mathbb R^n):=\bigg\{f\in \mathcal{C}^{1}(\mathbb R^n):\|f\|_{\dot{W}^{1,n}}=\bigg(\int_{\mathbb R^n}|\nabla f(x)|^ndx\bigg)^{1/n}<+\infty\bigg\}.
\end{equation*}
Here and in what follows, $\nabla$ denotes the standard gradient operator
\begin{equation*}
\Big(\frac{\partial}{\partial x_1},\frac{\partial}{\partial x_2},\dots,\frac{\partial}{\partial x_n}\Big)
\end{equation*}
when it acts on the function $f(x)$ for $x=(x_1,x_2,\dots,x_n)$. The well-known Poincar\'{e} embedding theorem states that $\dot{W}^{1,n}(\mathbb R^n)$ is a proper subspace of $\mathrm{BMO}(\mathbb R^n)$(see \cite[Theorem 4.1]{xiao1}, \cite[Theorem 1.4]{xiao2} for instance). Indeed, by Poincar\'{e}'s inequality
\begin{equation*}
\frac{1}{m(\mathcal{B})}\int_{\mathcal{B}}|f(x)-f_{\mathcal{B}}|\,dx\leq C(n)\bigg(\int_{\mathcal{B}}|\nabla f(x)|^ndx\bigg)^{1/n},
\end{equation*}
we can deduce from this that
\begin{equation*}
\dot{W}^{1,n}(\mathbb R^n)\subset\mathrm{BMO}(\mathbb R^n),
\end{equation*}
and the Poincar\'{e} inclusion is strict.
    \item Let $2\leq n\in \mathbb{N}$. The following conformal invariant Sobolev space, denoted by $\mathrm{CIS}(\mathbb R^n)$, was introduced by Xiao in \cite{xiao1}.
\begin{equation*}
\mathrm{CIS}(\mathbb R^n):=\bigg\{f\in\mathcal{C}^{1}(\mathbb R^n):\|f\|_{\mathrm{CIS}}<+\infty\bigg\},
\end{equation*}
where
\begin{equation*}
\|f\|_{\mathrm{CIS}}:=\sup_{\mathcal{B}\subset\mathbb R^n}\bigg(|\mathcal{B}|^{\frac{2-n}{n}}\int_{\mathcal{B}}|\nabla f(x)|^2dx\bigg)^{1/2}.
\end{equation*}
In other words, the conformal invariant space $\mathrm{CIS}(\mathbb R^n)$ is defined as the set of all functions $f\in\mathcal{C}^{1}(\mathbb R^n)$ with $|\nabla f(x)|\in M^{2,n-2}(\mathbb R^n)$, which turns out to be a proper subspace of $\mathrm{BMO}(\mathbb R^n)$. If $n=2$, then we have
\begin{equation*}
\mathrm{CIS}(\mathbb R^n)=\dot{W}^{1,2}(\mathbb R^n)(|\nabla f|\in L^2(\mathbb R^n)).
\end{equation*}
If $n\geq3$, then by using H\"{o}lder's inequality, we deduce that for an arbitrary fixed ball $\mathcal{B}\subset\mathbb R^n$,
\begin{equation*}
\begin{split}
\bigg(\int_{\mathcal{B}}|\nabla f(x)|^2dx\bigg)^{1/2}
&\leq\bigg(\int_{\mathcal{B}}|\nabla f(x)|^ndx\bigg)^{1/n}m(\mathcal{B})^{1/2-1/n}\\
&\leq\bigg(\int_{\mathbb R^n}|\nabla f(x)|^ndx\bigg)^{1/n}m(\mathcal{B})^{1/2-1/n},
\end{split}
\end{equation*}
which implies that
\begin{equation*}
m(\mathcal{B})^{1/n-1/2}\bigg(\int_{\mathcal{B}}|\nabla f(x)|^2dx\bigg)^{1/2}\leq \bigg(\int_{\mathbb R^n}|\nabla f(x)|^ndx\bigg)^{1/n}
\end{equation*}
holds for any ball $\mathcal{B}\subset\mathbb R^n$ and $f\in \mathcal{C}^{1}(\mathbb R^n)$, and hence
\begin{equation*}
\dot{W}^{1,n}(\mathbb R^n)\subset\mathrm{CIS}(\mathbb R^n)\quad \&\quad \|f\|_{\mathrm{CIS}}\leq\|f\|_{\dot{W}^{1,n}}=\|\nabla f\|_{L^n}.
\end{equation*}
Moreover, the left inclusion is strict (see \cite[Theorem 4.1]{xiao1} and \cite[Theorem 1.4]{xiao2}). Similarly, for any $1\leq p\leq n$, we are motivated by \cite{xiao1} and \cite{xiao2} to define
\begin{equation*}
\mathrm{CIS}^p(\mathbb R^n):=\bigg\{f\in\mathcal{C}^{1}(\mathbb R^n):\|f\|_{\mathrm{CIS}^p}<+\infty\bigg\},
\end{equation*}
where
\begin{equation*}
\big\|f\big\|_{\mathrm{CIS}^p}:=\big\|\nabla f\big\|_{M^{p,n-p}}
=\sup_{\mathcal{B}\subset\mathbb R^n}\bigg(m(\mathcal{B})^{\frac{p-n}{n}}\int_{\mathcal{B}}|\nabla f(x)|^pdx\bigg)^{1/p}.
\end{equation*}
By using H\"{o}lder's inequality once again, for each fixed ball $\mathcal{B}\subset\mathbb R^n$, we can see that
\begin{equation*}
\bigg(\frac{1}{m(\mathcal{B})}\int_{\mathcal{B}}|\nabla f(x)|^2dx\bigg)^{1/2}\leq\bigg(\frac{1}{m(\mathcal{B})}\int_{\mathcal{B}}|\nabla f(x)|^pdx\bigg)^{1/p}
\end{equation*}
whenever $2\leq p\leq n$. Then we have
\begin{equation*}
\big\|f\big\|_{\mathrm{CIS}}\leq \big\|f\big\|_{\mathrm{CIS}^p}\quad \&\quad \mathrm{CIS}^p(\mathbb R^n)\subset\mathrm{CIS}(\mathbb R^n)\subset \mathrm{BMO}(\mathbb R^n).
\end{equation*}
In particular, when $p=n$, we immediately obtain
\begin{equation*}
\mathrm{CIS}^p(\mathbb R^n)=\dot{W}^{1,n}(\mathbb R^n)\subset\mathrm{BMO}(\mathbb R^n)\quad\&\quad \big\|f\big\|_{\mathrm{CIS}^p}=\big\|\nabla f\big\|_{L^{n}}=\big\|f\big\|_{\dot{W}^{1,n}}.
\end{equation*}
We now deal with the case of $1\leq p<2$. When $p=1$, by using Poincar\'{e}'s inequality, we know that for any ball $\mathcal{B}\subset\mathbb R^n$,
\begin{equation}\label{poin}
\int_{\mathcal{B}}|f(x)-f_{\mathcal{B}}|\,dx\leq C(n)\cdot m(\mathcal{B})^{1/n}\int_{\mathcal{B}}|\nabla f(x)|\,dx
\end{equation}
holds(see \cite[p.285]{mus}, for example). From this, we can deduce that
\begin{equation*}
\mathrm{CIS}^1(\mathbb R^n)\subset\mathrm{BMO}(\mathbb R^n)\quad\&\quad
\big\|f\big\|_{\mathrm{BMO}}\leq C(n)\big\|f\big\|_{\mathrm{CIS}^1}=C(n)\big\|\nabla f\big\|_{M^{1,n-1}},
\end{equation*}
where $C(n)$ is a positive constant depending only on the dimension $n$. Finally, when $1<p<2$, by following the steps as in the case $2<p<n$, one has
\begin{equation*}
\mathrm{CIS}(\mathbb R^n)\subset\mathrm{CIS}^p(\mathbb R^n)\subset\mathrm{BMO}(\mathbb R^n).
\end{equation*}
We summarize our discussion up to this point: functions that have gradient in $L^n(\mathbb R^n)$ are in the space $\mathrm{BMO}(\mathbb R^n)$, and functions that have gradient in $M^{p,n-p}(\mathbb R^n)$ are also in the space $\mathrm{BMO}(\mathbb R^n)$, where $1\leq p<n$. Furthermore, in view of \eqref{poin},   functions that have gradient in $VM^{p,n-p}(\mathbb R^n)$ are in the space $\mathrm{VMO}(\mathbb R^n)$.
	\item Let $P(x)$ be any polynomial function on $\mathbb R^n$. Then the function $\log|P(x)|$ belongs to the space $\mathrm{BMO}(\mathbb R^n)$. It is obvious that bounded uniformly continuous functions belong to the space $\mathrm{VMO}(\mathbb R^n)$. An useful example of discontinuous VMO function is $f(x):=|\log|x||^{\delta}$(unbounded), $0<\delta<1$. It should be pointed out that $\mathrm{VMO}(\mathbb R^n)$ is a closed subspace of $\mathrm{BMO}(\mathbb R^n)$. $\mathrm{VMO}(\mathbb R^n)$ can also be characterized as the subspace of $\mathrm{BMO}(\mathbb R^n)$ on which translation is continuous. One can see \cite{Dafni1} and \cite{Dafni2} for the recent development and applications of VMO space.
\end{enumerate}
\end{example}
A well known result by Adams states that for $1<p<q<\infty$, the classical fractional integral $\mathcal{I}_{\alpha}$ of order $\alpha$ is bounded from $M^{p,\lambda}(\mathbb R^n)$ to $M^{q,\lambda}(\mathbb R^n)$, where $0<\alpha<n$, $0<\lambda<n-\alpha p$ and $1/q=1/p-\alpha/{(n-\lambda)}$. See \cite{adams} and \cite{adams1}. This result extends the classical Sobolev embedding theorem to Morrey spaces. Moreover, in the limiting case $\lambda=n-\alpha p$, it was shown by Adams that the classical fractional integral operator $\mathcal{I}_{\alpha}$ is bounded from $M^{p,\lambda}(\mathbb R^n)$ into $\mathrm{BMO}(\mathbb R^n)$. See also \cite{adams} and \cite{adams1}. As a direct consequence of \eqref{dominate1}, we have the following estimate for the operator $\mathcal{L}^{-\alpha/2}$ on Morrey spaces.
\begin{theorem}
Let $0<\alpha<n$ and $1<p<{(n-\lambda)}/\alpha$. If $0<\lambda<n-\alpha p$ and $1/q=1/p-\alpha/{(n-\lambda)}$, then the generalized fractional integral operator $\mathcal L^{-\alpha/2}$ is bounded from $M^{p,\lambda}(\mathbb R^n)$ to $M^{q,\lambda}(\mathbb R^n)$.
\end{theorem}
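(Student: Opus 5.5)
The plan is to reduce everything to the pointwise domination \eqref{dominate1} together with Adams' theorem for the classical Riesz potential $\mathcal{I}_{\alpha}$ on Morrey spaces, which was recalled just before the statement. Fix $f\in M^{p,\lambda}(\mathbb R^n)$.

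\emph{Step 1.} Since $|f|$ has the same Morrey norm as $f$, we have $|f|\in M^{p,\lambda}(\mathbb R^n)$ with $\||f|\|_{M^{p,\lambda}}=\|f\|_{M^{p,\lambda}}$.

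\emph{Step 2.} By the kernel bound \eqref{kernelk}, the integral defining $\mathcal L^{-\alpha/2}(f)(x)$ converges absolutely wherever $\mathcal I_\alpha(|f|)(x)<\infty$. Moreover
\begin{equation*}
\big|\mathcal L^{-\alpha/2}(f)(x)\big|\leq C\,\mathcal I_\alpha(|f|)(x)
\end{equation*}
holds pointwise.

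\emph{Step 3.} For any ball $\mathcal B$, integrating the $q$-th power of this inequality over $\mathcal B$ and dividing by $m(\mathcal B)^{\lambda/n}$ gives
\begin{equation*}
\mathcal M_{q,\lambda}(\mathcal L^{-\alpha/2}f;x_0,r)\le C\,\mathcal M_{q,\lambda}(\mathcal I_\alpha|f|;x_0,r).
\end{equation*}
Taking the supremum over all balls yields $\|\mathcal L^{-\alpha/2}f\|_{M^{q,\lambda}}\le C\|\mathcal I_\alpha(|f|)\|_{M^{q,\lambda}}$.

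\emph{Step 4.} Adams' theorem \cite{adams} applies under the hypotheses $1<p<(n-\lambda)/\alpha$, $0<\lambda<n-\alpha p$ and $1/q=1/p-\alpha/(n-\lambda)$. It gives $\|\mathcal I_\alpha(|f|)\|_{M^{q,\lambda}}\le C\||f|\|_{M^{p,\lambda}}=C\|f\|_{M^{p,\lambda}}$. Combining this with Step 3 finishes the proof.

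The only point that needs care is Step 2. One must make sure $\mathcal L^{-\alpha/2}f$ is well defined for a general Morrey function, which need not lie in $L^2$, where the semigroup is a priori defined. The natural fix is to \emph{define} $\mathcal L^{-\alpha/2}f$ through the kernel $\mathcal K_\alpha$ of \eqref{kgamma}. This is legitimate because $\mathcal I_\alpha(|f|)<\infty$ almost everywhere; that finiteness follows from Adams' estimate itself, first for truncations of $|f|$ and then by monotone convergence.

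If one prefers a self-contained argument instead of quoting Adams, I would use Hedberg's splitting. Write $\mathcal I_\alpha|f|(x)$ as the integral over $|x-y|<\delta$ plus the integral over $|x-y|\ge\delta$. The near part is bounded by $C\delta^{\alpha}Mf(x)$, where $Mf$ is the Hardy--Littlewood maximal function. For the far part, decompose dyadically in $|x-y|$ and apply H\"older's inequality on each annulus together with the Morrey condition; this bounds it by $C\delta^{\alpha-(n-\lambda)/p}\|f\|_{M^{p,\lambda}}$. Optimizing in $\delta$ gives
\begin{equation*}
\mathcal I_\alpha|f|(x)\le C\,Mf(x)^{p/q}\,\|f\|_{M^{p,\lambda}}^{1-p/q}.
\end{equation*}
Finally, the boundedness of $M$ on $M^{p,\lambda}(\mathbb R^n)$ for $p>1$ completes the argument. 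I expect that last boundedness statement to be the main technical ingredient in this route.
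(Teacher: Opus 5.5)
Your proposal is correct and follows the paper's route. The paper obtains this theorem as a direct consequence of the pointwise domination $|\mathcal L^{-\alpha/2}(f)(x)|\le C\,\mathcal I_\alpha(|f|)(x)$, which comes from the Gaussian kernel bound, combined with Adams' $(M^{p,\lambda},M^{q,\lambda})$ theorem for $\mathcal I_\alpha$. Your two extras are also correct but go beyond what the paper spells out: defining $\mathcal L^{-\alpha/2}f$ through the kernel $\mathcal K_\alpha$ for Morrey functions, and the self-contained Hedberg-splitting proof of Adams' estimate via the boundedness of the maximal operator on $M^{p,\lambda}(\mathbb R^n)$ for $p>1$.
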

Then, a natural question occurs to us: what happens for the operator $\mathcal{L}^{-\alpha/2}$ if $\lambda=n-\alpha p$?The main purpose of this paper is to prove that the operator $\mathcal{L}^{-\alpha/2}$ is bounded from ${M}^{p,\lambda}(\mathbb R^n)$ into $\mathrm{BMO}_{\mathcal{L}}(\mathbb R^n)$, where $0<\alpha<n$ and $1\leq p<n/{\alpha}$. Moreover, we also show that the operator $\mathcal{L}^{-\alpha/2}$ is bounded from $V{M}^{p,\lambda}(\mathbb R^n)$ into $\mathrm{VMO}_{\mathcal{L}}(\mathbb R^n)$, where $\mathrm{BMO}_{\mathcal{L}}(\mathbb R^n)$ and $\mathrm{VMO}_{\mathcal{L}}(\mathbb R^n)$ are the spaces of bounded mean oscillation and vanishing mean oscillation associated with the operator $\mathcal{L}$, respectively (see Definitions \ref{newbmo} and \ref{newvmo} below). The main results are given in Theorems \ref{thm1} and \ref{thm2}. The proof is based on the pointwise estimate \eqref{kernelk} and the kernel estimate for the
difference operator $\mathcal{L}^{-\alpha/2}-e^{-t\mathcal L}\mathcal{L}^{-\alpha/2}$ with $0<\alpha<n$ (see Lemma \ref{wanglemma2} below), as well as some properties of ${M}^{p,\lambda}(\mathbb R^n)$ and $V{M}^{p,\lambda}(\mathbb R^n)$. In particular, when $\mathcal{L}=-\Delta$ is the Laplacian operator on $\mathbb R^n$, we can get the corresponding results for $\mathcal{I}_{\alpha}$ with $0<\alpha<n$. To the best of our knowledge, such a ($V{M}^{p,\lambda}$,VMO) result was first proved by Rafeiro--Samko in \cite[Theorem 4]{Rafeiro}. As a corollary, we obtain that for all $0<\alpha<n$ and $p=n/{\alpha}$, the operator $\mathcal{L}^{-\alpha/2}$ is bounded from $L^{p,\infty}(\mathbb R^n)$ into $\mathrm{BMO}_{\mathcal{L}}(\mathbb R^n)$ since $L^{p,\infty}(\mathbb R^n)$ is continuously embedded into ${M}^{q,\lambda}(\mathbb R^n)$ provided that $1\leq q<p$ and $\lambda=n(1-q/p)$. In particular, when $\mathcal{L}=-\Delta$ is the Laplacian operator on $\mathbb R^n$, this ($L^{p,\infty}$,BMO) result for classical fractional integral $\mathcal{I}_{\alpha}$ was previously known, see \cite[p.164]{stein}.

\section{Definitions, notations and assumptions}
\label{sec2}
In this section, we will recall the function spaces of bounded mean oscillation and vanishing mean oscillation associated with the operator $\mathcal{L}$, and state a lemma, which plays an important role in the proofs of our main theorems.
\subsection{Definitions of the spaces $\mathrm{BMO}_{\mathcal{L}}(\mathbb R^n)$ and $\mathrm{VMO}_{\mathcal{L}}(\mathbb R^n)$}
We first give some preliminary definitions of ``generalized approximation to the identity" and the related sharp maximal function, which were introduced by Martell in \cite{martell}.

A family of operators $\{\mathbf{A}_t:t>0\}$ is said to be ``generalized approximation to the identity", if for every $t>0$, $\mathbf{A}_t$ is represented by the kernel $p_t(x,y)$, which is a measurable function defined on $\mathbb R^n\times \mathbb R^n$ and satisfies an upper bound
\begin{equation*}
\big|p_t(x,y)\big|\leq h_t(x,y):=t^{-n/2}g\bigg(\frac{|x-y|}{\sqrt{t\,}}\bigg),
\end{equation*}
for all $x,y\in\mathbb R^n$ and all $t>0$. Here $g$ is a positive, bounded and decreasing function satisfying
\begin{equation}\label{15}
\lim_{r\to\infty}r^{n+\varepsilon}g(r)=0
\end{equation}
for some $\varepsilon>0$.

Let $\varepsilon$ be as in \eqref{15} and let $0<\varrho<\varepsilon$. A locally integrable function $f$ is said to be a function of $\varrho$-type, if it satisfies
\begin{equation}\label{16}
\bigg(\int_{\mathbb R^n}\frac{|f(x)|^2}{(1+|x|)^{n+\varrho}}dx\bigg)^{1/2}\leq C<+\infty,
\end{equation}
and we denote by $\mathcal{M}_{\varrho}$ the collection of all functions of $\varrho$-type. The smallest bound $C>0$ satisfying the condition \eqref{16} is then taken to be the norm of $f$ in $\mathcal{M}_{\varrho}$, and is denoted by $\|f\|_{\mathcal{M}_{\varrho}}$. It is easy to see that $\mathcal{M}_{\varrho}$ is a Banach function space under the norm $\|f\|_{\mathcal{M}_{\varrho}}$. We set
\begin{equation*}
\mathcal{M}:=\bigcup_{\varrho:0<\varrho<\varepsilon}\mathcal{M}_{\varrho}.
\end{equation*}
For any given $\{\mathbf{A}_t:t>0\}$ and $f\in \mathcal{M}$, the sharp maximal function $M^{\#}_{\mathbf{A}}(f)$ associated with ``generalized approximation to the identity" is defined as follows:
\begin{equation*}
M^{\#}_{\mathbf{A}}(f)(x):=\sup_{x\in \mathcal{B}}\frac{1}{m(\mathcal{B})}
\int_{\mathcal{B}}\big|f(y)-\mathbf{A}_{t_{\mathcal{B}}}f(y)\big|\,dy,
\end{equation*}
where $t_{\mathcal{B}}=r_{\mathcal{B}}^2$, $r_{\mathcal{B}}$ is the radius of the ball $\mathcal{B}$,
\begin{equation*}
\mathbf{A}_{t_{\mathcal{B}}}f(y):=\int_{\mathbb R^n}p_{t_{\mathcal{B}}}(y,z)f(z)\,dz,
\end{equation*}
and the supremum is taken over all balls $\mathcal{B}$ containing the point $x$. The sharp maximal function $M^{\#}_{\mathbf{A}}$ was first introduced and studied by Martell in \cite{martell}. We remark that our analytic semigroup $\big\{e^{-t\mathcal L}:t>0\big\}$ is ``generalized approximation to the identity". By virtue of the Gaussian upper bound \eqref{G} we can extend the action of the semigroup $\big\{e^{-t\mathcal L}:t>0\big\}$ to the space $\mathcal{M}$. Namely, we can define $e^{-t\mathcal L}(f)$ for all $f\in \mathcal{M}$. In particular, we give the sharp maximal function $M^{\#}_{\mathcal{L}}$ associated with $\big\{e^{-t\mathcal L}:t>0\big\}$.
\begin{definition}\label{martel}
For any $f\in \mathcal{M}$, the sharp maximal function $M^{\#}_{\mathcal{L}}(f)$ is defined by
\begin{equation*}
M^{\#}_{\mathcal{L}}(f)(x):=\sup_{x\in \mathcal{B}}\frac{1}{m(\mathcal{B})}
\int_{\mathcal{B}}\big|f(y)-e^{-t_{\mathcal{B}}\mathcal{L}}f(y)\big|\,dy,
\end{equation*}
where $t_{\mathcal{B}}=r_{\mathcal{B}}^2$, $r_{\mathcal{B}}$ is the radius of the ball $\mathcal{B}$ and the supremum is taken over all balls $\mathcal{B}$ in $\mathbb R^n$.
\end{definition}
\begin{definition}\label{newbmo}
Let $f\in \mathcal{M}$. We say that a function $f$ is in the space $\mathrm{BMO}_{\mathcal{L}}(\mathbb R^n)$ associated with the operator $\mathcal{L}$, if the sharp maximal function $M^{\#}_{\mathcal{L}}(f)\in L^{\infty}(\mathbb R^n)$, and we define
\begin{equation}\label{newbmonorm}
\big\|f\big\|_{\mathrm{BMO}_{\mathcal{L}}}:=\big\|M^{\#}_{\mathcal{L}}(f)\big\|_{L^{\infty}}.
\end{equation}
\end{definition}
The new $\mathrm{BMO}_{\mathcal{L}}(\mathbb R^n)$ space associated with the operator $\mathcal{L}$ was first introduced and studied by Duong and Yan in \cite{duong3}(see also \cite{deng2,deng,duong2}). The idea is that the quantity $e^{-t\mathcal{L}}(f)$ can be viewed as an average version of $f$(at the scale $t$) and the quantity $e^{-t_{\mathcal{B}}\mathcal{L}}f(x)$ can be used to replace the mean value $f_{\mathcal{B}}$ in the definition of the classical BMO space. Here $t_{\mathcal{B}}$ is equal to the square of the radius of $\mathcal{B}$.
\begin{remark}
\begin{enumerate}
Let us now list some important properties of the space $\mathrm{BMO}_{\mathcal{L}}(\mathbb R^n)$.
\item Notice that $\mathrm{BMO}_{\mathcal{L}}(\mathbb R^n)$ is a semi-normed vector space, with the semi-norm vanishing on the kernel space $\mathrm{Ker}_{\mathcal{L}}$, which is defined by
\begin{equation*}
\mathrm{Ker}_{\mathcal{L}}:=\Big\{f\in \mathcal{M}:e^{-t\mathcal{L}}(f)=f,~~\mbox{for all}~~ t>0\Big\}.
\end{equation*}
The class of $\mathrm{BMO}_{\mathcal{L}}$ functions (modulo $\mathrm{Ker}_{\mathcal{L}}$) is a Banach function space with the norm $\|\cdot\|_{\mathrm{BMO}_{\mathcal{L}}}$ defined as in \eqref{newbmonorm}. For a discussion of the dimensions of $\mathrm{Ker}_{\mathcal{L}}$, when $\mathcal{L}$ is a second-order elliptic operator of divergence form or a Schr\"{o}dinger operator, see \cite{duong2} and \cite{duong3}. Let $\mathcal{B}=B(x_0,r)$ be a ball in $\mathbb R^n$ centered at $x_0$ with radius $r>0$. When $f\in \mathcal{M}$, $M^{\#}_{\mathcal{L}}(f)\in L^{\infty}(\mathbb R^n)$ is equivalent to
\begin{equation*}
\sup_{x_0\in\mathbb R^n,r>0}\frac{1}{m(\mathcal{B})}\int_{\mathcal{B}}\big|f(y)-e^{-t_{\mathcal{B}}\mathcal{L}}f(y)\big|\,dy<+\infty.
\end{equation*}
\item A natural question arising from Definition \ref{newbmo} is to compare the classical BMO space with the $\mathrm{BMO}_{\mathcal{L}}$ space associated with the operator $\mathcal{L}$. Denote by $e^{t\Delta}$ the heat semigroup on $\mathbb R^n$. It can be shown that the classical BMO space (modulo all constant functions) and the $\mathrm{BMO}_{\Delta}$ space (modulo $\mathrm{Ker}_{\Delta}$) coincide, and their norms are equivalent, see \cite{duong2} and \cite{duong3}. As in the classical case, a variant of the John--Nirenberg inequality holds for $\mathrm{BMO}_{\mathcal{L}}$ functions. This result and H\"{o}lder's inequality imply that for any $1<p<\infty$,
\begin{equation*}
\big\|f\big\|_{\mathrm{BMO}_{\mathcal{L}}}
\leq\bigg(\frac{1}{m(\mathcal{B})}\int_{\mathcal{B}}\big|f(y)-e^{-t_{\mathcal{B}}\mathcal{L}}f(y)\big|^pdy\bigg)^{1/p}
\leq C\big\|f\big\|_{\mathrm{BMO}_{\mathcal{L}}}
\end{equation*}
with $C>0$ depending only on the dimension $n$ and $p$. Let $\mathrm{BMO}^p_{\mathcal{L}}(\mathbb R^n)$ be the set of all locally integrable functions $f\in \mathcal{M}$ such that
\begin{equation*}
\big\|f\big\|_{\mathrm{BMO}^p_{\mathcal{L}}}:=
\sup_{\mathcal{B}\subset\mathbb R^n}\bigg(\frac{1}{m(\mathcal{B})}
\int_{\mathcal{B}}\big|f(y)-e^{-t_{\mathcal{B}}\mathcal{L}}f(y)\big|^pdy\bigg)^{1/p}<+\infty.
\end{equation*}
Then for all $1\leq p<\infty$, the spaces $\mathrm{BMO}^p_{\mathcal{L}}(\mathbb R^n)$ coincide, and the norms $\|\cdot\|_{\mathrm{BMO}^p_{\mathcal{L}}}$ are equivalent with respect to different values of $p$. See \cite{deng2,duong2,duong3} for more details.
\item Assume that for every $t>0$, the equation
\begin{equation*}
e^{-t\mathcal{L}}(\mathbf{1})(x)=\mathbf{1}
\end{equation*}
holds for almost everywhere $x\in\mathbb R^n$, that is,
\begin{equation*}
\int_{\mathbb R^n}\mathcal{P}_t(x,y)\,dy=1
\end{equation*}
for almost all $x\in\mathbb R^n$. Then we have $\mathrm{BMO}(\mathbb R^n)\subset\mathrm{BMO}_{\mathcal{L}}(\mathbb R^n)$, and there exists a positive constant $C>0$ such that
\begin{equation}\label{18}
\big\|f\big\|_{\mathrm{BMO}_{\mathcal{L}}}\leq C\big\|f\big\|_{\mathrm{BMO}}.
\end{equation}
However, the converse inequality does not hold in general. See Martell \cite[Proposition 3.1]{martell} and Deng--Duong--Sikora--Yan \cite[Proposition 2.3]{deng}. As pointed out in \cite{deng}, the condition $e^{-t\mathcal{L}}(\mathbf{1})(x)=\mathbf{1}$ is also necessary for \eqref{18}. Indeed, it follows from \eqref{18} that $\big\|\mathbf{1}\big\|_{\mathrm{BMO}_{\mathcal{L}}}=0$. This in turn implies that for all $t>0$, $e^{-t\mathcal{L}}(\mathbf{1})(x)=\mathbf{1}$ holds for almost everywhere $x\in\mathbb R^n$. Under the additional condition that the kernel $\mathcal{P}_t(x,y)$ of $e^{-t\mathcal L}$ has sufficient regularities on space variables $x,y$ and
\begin{equation*}
e^{-t\mathcal{L}}(\mathbf{1})(x)=e^{-t\mathcal{L}^{*}}(\mathbf{1})(x)=\mathbf{1}
\end{equation*}
holds for almost all $x\in\mathbb R^n$, it can be verified that the classical space $\mathrm{BMO}(\mathbb R^n)$ and $\mathrm{BMO}_{\mathcal{L}}(\mathbb R^n)$ coincide, and their norms are equivalent, where $\mathcal{L}^{*}$ denotes the adjoint operator of $\mathcal{L}$. For further details about the applications of the space $\mathrm{BMO}_{\mathcal{L}}(\mathbb R^n)$ associated with the operator $\mathcal{L}$, we refer the reader to \cite{deng2,deng,duong2,duong3} and the references therein.
\end{enumerate}
\end{remark}
Let us now introduce the new space $\mathrm{VMO}_{\mathcal{L}}(\mathbb R^n)$ associated with the operator $\mathcal{L}$, which generalizes the classical VMO space.
\begin{definition}\label{newvmo}
Let $f\in \mathcal{M}$. We say that a function $f\in \mathrm{BMO}_{\mathcal{L}}(\mathbb R^n)$ is in the space $\mathrm{VMO}_{\mathcal{L}}(\mathbb R^n)$ associated with the operator $\mathcal{L}$, if it satisfies the limiting condition
\begin{equation*}
\lim_{r_{\mathcal{B}}\to 0^{+}}\eta(f;r_{\mathcal{B}})=0,
\end{equation*}
where
\begin{equation}\label{modulus}
\eta(f;r_{\mathcal{B}}):=\sup_{x_0\in\mathbb R^n}\frac{1}{m(\mathcal{B})}
\int_{\mathcal{B}}\big|f(y)-e^{-t_{\mathcal{B}}\mathcal{L}}f(y)\big|\,dy,
\end{equation}
$t_{\mathcal{B}}=r_{\mathcal{B}}^2$ and $r_{\mathcal{B}}>0$ is the radius of the ball $\mathcal{B}=B(x_0,r_{\mathcal{B}})$. Moreover, we endow $\mathrm{VMO}_{\mathcal{L}}(\mathbb R^n)$ with the norm of $\mathrm{BMO}_{\mathcal{L}}(\mathbb R^n)$.
\end{definition}
The space $\mathrm{VMO}_{\mathcal{L}}(\mathbb R^n)$ was introduced and developed by Deng--Duong--Song--Tan--Yan in \cite{d}. We can also show that when $\mathcal{L}=-\Delta$ is the Laplacian operator on $\mathbb R^n$, then the corresponding space $\mathrm{VMO}_{\mathcal{L}}(\mathbb R^n)$ coincides with the classical space $\mathrm{VMO}(\mathbb R^n)$, and their norms are equivalent. See the same paper \cite{d}.

\subsection{A key lemma}
To study the operator $\mathcal{L}^{-\alpha/2}$, we also need the following key lemma, which gives the kernel estimate of the difference operator $\mathcal{L}^{-\alpha/2}-e^{-t\mathcal L}\mathcal{L}^{-\alpha/2}$ for any $t>0$ and $0<\alpha<n$. Observe that
\begin{equation*}
I-e^{-t\mathcal L}=\int_0^{t}Le^{-sL}ds.
\end{equation*}
Hence, by using \eqref{gefrac}, we see that for any $t>0$ and $0<\alpha<n$,
\begin{equation*}
\begin{split}
\mathcal{L}^{-\alpha/2}-e^{-t\mathcal L}\mathcal{L}^{-\alpha/2}&=(I-e^{-t\mathcal L})\mathcal{L}^{-\alpha/2}\\
&=\frac{1}{\Gamma(\alpha/2)}\int_0^{t}\int_0^{+\infty}Le^{-(s+r)L}\cdot r^{\alpha/2-1}drds\\
&=\frac{1}{\Gamma(\alpha/2)}\int_0^{t}\int_0^{+\infty}(s+r)Le^{-(s+r)L}\cdot\frac{r^{\alpha/2-1}}{s+r}drds.
\end{split}
\end{equation*}
It can be proved that the kernel of the operator $vLe^{-vL}$ also satisfies Gaussian upper bound \eqref{G}(see, for example, \cite{deng} and \cite{d}). Hence, the operator $(I-e^{-t\mathcal L})\mathcal{L}^{-\alpha/2}$ has an associated kernel $\widetilde{K}_{\alpha,t}(x,y)$ which satisfies
\begin{equation*}
\widetilde{K}_{\alpha,t}(x,y)\leq C\int_0^{t}\int_0^{+\infty}\frac{1}{(s+r)^{n/2}}e^{-A\frac{|x-y|^2}{s+r}}\cdot\frac{r^{\alpha/2-1}}{s+r}drds.
\end{equation*}
Then we have the following estimate for $\widetilde{K}_{\alpha,t}(x,y)$.
\begin{lemma}\label{wanglemma2}
Assume that the semigroup $\big\{e^{-t\mathcal L}:t>0\big\}$ has a kernel $\mathcal{P}_t(x,y)$ satisfying the Gaussian upper bound \eqref{G}.
Then for any $0<\alpha<n$, the difference operator $(I-e^{-t\mathcal L})\mathcal{L}^{-\alpha/2}:=\mathcal{L}^{-\alpha/2}-e^{-t\mathcal L}\mathcal{L}^{-\alpha/2}$ has an associated kernel $\widetilde{K}_{\alpha,t}(x,y)$ which satisfies
\begin{equation*}
\widetilde{K}_{\alpha,t}(x,y)\leq\frac{C}{|x-y|^{n-\alpha}}\cdot\frac{t}{|x-y|^2}.
\end{equation*}
Here the constant $C$ is independent of $x,y\in\mathbb R^n$ and $t\in(0,+\infty)$.
\end{lemma}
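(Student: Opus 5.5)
Starting from the integral bound displayed just before the statement,
\begin{equation*}
|\widetilde{K}_{\alpha,t}(x,y)|\leq C\int_0^{t}\int_0^{+\infty}\frac{1}{(s+r)^{n/2}}e^{-A\frac{|x-y|^2}{s+r}}\cdot\frac{r^{\alpha/2-1}}{s+r}\,dr\,ds,
\end{equation*}
the plan is to bound the inner $r$-integral uniformly in $s$ by a constant multiple of $|x-y|^{\alpha-n-2}$. Integrating this bound over $s\in(0,t)$ then supplies the extra factor $t$, giving $t\,|x-y|^{\alpha-n-2}=|x-y|^{\alpha-n}\cdot t/|x-y|^2$. Throughout, write $d=|x-y|$ and $v=s+r$; this representation already encodes the Gaussian bound for $vLe^{-vL}$ that the paper takes as known.

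For fixed $s>0$, the inner integral is $\int_0^\infty v^{-n/2-1}e^{-Ad^2/v}r^{\alpha/2-1}\,dr$ with $v=s+r$. The key step is to remove the dependence on $s$. Since $v\ge r$ and $n/2+1>0$, we have $v^{-n/2-1}\le r^{-n/2-1}$. However, $e^{-Ad^2/v}$ increases in $v$, so it cannot simply be replaced by $e^{-Ad^2/r}$. Instead, use the elementary bound $u^{\beta}e^{-Au}\le C_\beta$ for $u>0$, with $\beta=n/2+1$ and $u=d^2/v$. This gives
\begin{equation*}
v^{-n/2-1}e^{-Ad^2/v}\le C\,d^{-n-2},
\end{equation*}
which is useful only where $r$ is small. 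So split the $r$-integral at $r=d^2$.

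On $r\le d^2$, use $v^{-n/2-1}e^{-Ad^2/v}\le C d^{-n-2}$. Since $\alpha/2>0$, $\int_0^{d^2}r^{\alpha/2-1}\,dr\approx d^{\alpha}$, so this piece contributes $C d^{\alpha-n-2}$. On $r>d^2$, bound the exponential by $1$ and use $v\ge r$, so the piece is at most $\int_{d^2}^\infty r^{-n/2-1+\alpha/2-1}\,dr$. This converges because $\alpha<n+2$ (indeed $\alpha<n$), and equals $C d^{\alpha-n-2}$. Both bounds are independent of $s$, so integrating over $s\in(0,t)$ yields the claim with $C$ depending only on $n,\alpha,A$ and the Gaussian constant.

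The main point needing care is justifying the starting representation: Fubini in $s,r$, and the Gaussian bound for the kernel of $vLe^{-vL}$ with constants uniform in $v$ (possibly with a smaller $A$). I take these as given, as the paper does before the lemma. The convergence conditions used are only $0<\alpha$ near $r=0$ and $\alpha<n+2$ at infinity. I also note that the lemma is stated for $\widetilde{K}_{\alpha,t}$ itself rather than its absolute value; the argument proves the stronger bound on $|\widetilde{K}_{\alpha,t}|$.
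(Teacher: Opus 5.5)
Your proof is correct and follows the route the paper sets up: you start from the same double-integral bound that the paper derives from $I-e^{-t\mathcal L}=\int_0^t \mathcal{L}e^{-s\mathcal L}\,ds$ and the Gaussian bound for the kernel of $v\mathcal{L}e^{-v\mathcal L}$. The paper does not carry out the remaining estimate itself and instead cites Duong--Yan and Deng--Duong--Sikora--Yan. Your split of the $r$-integral at $r=|x-y|^2$ supplies that estimate explicitly and uniformly in $s$, for the whole range $0<\alpha<n$: you use $u^{n/2+1}e^{-Au}\le C$ on the small-$r$ part, and $s+r\ge r$ with the exponential bounded by $1$ on the large-$r$ part.
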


For the proof of this lemma, see Duong--Yan
\cite[Lemma 3.1]{duong1} for $0<\alpha<1$ and Deng--Duong--Sikora--Yan \cite[Lemma 5.3]{deng} for $0<\alpha<n$.

Throughout this paper the letter $C$ stands for a positive constant not necessarily the same at each occurrence. The notation $\mathbf{X}\approx\mathbf{Y}$ means that $C_1\mathbf{Y}\leq \mathbf{X}\leq C_2\mathbf{Y}$ with some positive constants $C_1$ and $C_2$. For given $\mathcal{B}=B(x_0,r_{B})$ and $\lambda>0$, we write $\lambda \mathcal{B}$ for the $\lambda$-dilate ball, which is the ball with the same center $x_0$ and with radius $\lambda r_{B}$. For a measurable set $E$ in $\mathbb R^n$, $m(E)$ denotes the Lebesgue measure of the set $E$ and $\chi_{E}$ denotes the characteristic function of the set $E$.

\section{Main results}
Let $0<\alpha<n$ and $\mathcal L^{-\alpha/2}$ be the generalized fractional integral defined in \eqref{gefrac}. We will prove the boundedness of $\mathcal L^{-\alpha/2}$ in the limiting Sobolev case $\alpha p=n-\lambda$ with $1\leq p<n/{\alpha}$. It is shown that for all $0<\alpha<n$, the operator $\mathcal L^{-\alpha/2}$ is bounded from ${M}^{p,\lambda}(\mathbb R^n)$ into $\mathrm{BMO}_{\mathcal{L}}(\mathbb R^n)$, and also bounded from $V{M}^{p,\lambda}(\mathbb R^n)$ into $\mathrm{VMO}_{\mathcal{L}}(\mathbb R^n)$. The main results in the paper are formulated as follows.
\begin{theorem}\label{thm1}
Let $0<\alpha< n$ and $1\leq p<n/{\alpha}$. Suppose that $\lambda=n-\alpha p$. Then for any $f\in{M}^{p,\lambda}(\mathbb R^n)$, there exists a positive constant $C>0$ independent of $f$ such that
\begin{equation*}
\big\|\mathcal L^{-\alpha/2}(f)\big\|_{\mathrm{BMO}_{\mathcal{L}}}
\leq C\big\|f\big\|_{{M}^{p,\lambda}}.
\end{equation*}
\end{theorem}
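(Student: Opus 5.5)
Fix an arbitrary ball $\mathcal{B}=B(x_0,r)$ with $t_{\mathcal{B}}=r^2$. By the remark after Definition \ref{newbmo}, it suffices to show
\begin{equation*}
\frac{1}{m(\mathcal{B})}\int_{\mathcal{B}}\big|\mathcal L^{-\alpha/2}f(y)-e^{-t_{\mathcal{B}}\mathcal L}\mathcal L^{-\alpha/2}f(y)\big|\,dy\leq C\|f\|_{M^{p,\lambda}}
\end{equation*}
with $C$ independent of $\mathcal{B}$. We split $f=f_1+f_2$, where $f_1=f\chi_{2\mathcal{B}}$ and $f_2=f\chi_{(2\mathcal{B})^c}$. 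Then $(I-e^{-t_{\mathcal{B}}\mathcal L})\mathcal L^{-\alpha/2}f=(I-e^{-t_{\mathcal{B}}\mathcal L})\mathcal L^{-\alpha/2}f_1+(I-e^{-t_{\mathcal{B}}\mathcal L})\mathcal L^{-\alpha/2}f_2$. We treat the local part with the size estimate \eqref{kernelk} and the Gaussian bound \eqref{G}, and the global part with Lemma \ref{wanglemma2}. Throughout we use $\lambda=n-\alpha p$, which gives $\|f\chi_{B(x_0,s)}\|_{L^p}\le C s^{\lambda/p}\|f\|_{M^{p,\lambda}}=Cs^{n/p-\alpha}\|f\|_{M^{p,\lambda}}$.

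\emph{Local part.} Bound the two pieces separately. For $\mathcal L^{-\alpha/2}f_1$, use \eqref{kernelk} together with Fubini:
\begin{equation*}
\int_{\mathcal{B}}|\mathcal L^{-\alpha/2}f_1|\,dy\le C\int_{2\mathcal{B}}|f(z)|\int_{\mathcal{B}}\frac{dy}{|y-z|^{n-\alpha}}\,dz\le Cr^{\alpha}\int_{2\mathcal{B}}|f|.
\end{equation*}
By H\"older's inequality, $\int_{2\mathcal{B}}|f|\le Cr^{n(1-1/p)}r^{n/p-\alpha}\|f\|_{M^{p,\lambda}}=Cr^{n-\alpha}\|f\|_{M^{p,\lambda}}$. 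Dividing by $m(\mathcal{B})$ then gives the bound $C\|f\|_{M^{p,\lambda}}$; this works for $p=1$ as well.

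For $e^{-t_{\mathcal{B}}\mathcal L}\mathcal L^{-\alpha/2}f_1$, \eqref{G} gives a kernel dominated by $C\int h_{t}(y,w)|w-z|^{\alpha-n}\,dw$. Since the Gaussian is an $L^1$-normalized approximate identity and $|\cdot|^{\alpha-n}$ is a locally integrable radial kernel, $\int_{\mathcal{B}}$ of this is at most $C\int_{\mathcal{B}'}|w-z|^{\alpha-n}\,dw$ plus Gaussian tails. Concretely, we would bound $\int_{\mathcal{B}}h_{t_{\mathcal{B}}}(y,w)\,dy\le C\min\{1,(r/|w-x_0|)^{N}\}$ and then integrate $|w-z|^{\alpha-n}$ against this weight, obtaining again $Cr^{\alpha}$ uniformly in $z\in2\mathcal{B}$. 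This bookkeeping is routine but slightly fiddly.

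\emph{Global part.} For $y\in\mathcal{B}$ and $z\notin2\mathcal{B}$ we have $|y-z|\approx|x_0-z|$, so Lemma \ref{wanglemma2} gives
\begin{equation*}
|(I-e^{-t_{\mathcal{B}}\mathcal L})\mathcal L^{-\alpha/2}f_2(y)|\le C\sum_{k\ge1}\int_{2^{k+1}\mathcal{B}\setminus2^k\mathcal{B}}\frac{r^2|f(z)|}{|x_0-z|^{n-\alpha+2}}\,dz\le C\sum_{k\ge1}\frac{r^2}{(2^kr)^{n-\alpha+2}}\int_{2^{k+1}\mathcal{B}}|f|.
\end{equation*}
By H\"older's inequality and the Morrey condition, $\int_{2^{k+1}\mathcal{B}}|f|\le C(2^kr)^{n-\alpha}\|f\|_{M^{p,\lambda}}$, so the sum is at most $C\sum_k2^{-2k}\|f\|_{M^{p,\lambda}}$, uniformly in $y\in\mathcal{B}$. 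This step is clean; the factor $t/|x-y|^2$ from the lemma is exactly what makes the series converge, since the size bound alone would only give a divergent $\sum_k1$.

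The main obstacle is the local term $e^{-t_{\mathcal{B}}\mathcal L}\mathcal L^{-\alpha/2}f_1$. There we need a uniform-in-$\mathcal{B}$ bound, which requires care with the Gaussian tails. We would also need to justify that $\mathcal L^{-\alpha/2}f\in\mathcal M$, so that the $\mathrm{BMO}_{\mathcal L}$ norm makes sense; for this we would use the pointwise kernel bound and the growth $\int_{B(0,R)}|f|\lesssim R^{n-\alpha}$.
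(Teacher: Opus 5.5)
Your proposal is correct and follows the paper's proof essentially step for step. The shared steps are the same splitting $f=f\chi_{2\mathcal{B}}+f\chi_{(2\mathcal{B})^{\complement}}$, the kernel bound \eqref{kernelk} with Fubini and H\"older for $\mathcal L^{-\alpha/2}f_1$, and Lemma \ref{wanglemma2} on dyadic annuli giving $\sum_k 2^{-2k}$ for the global term. For $e^{-t_{\mathcal{B}}\mathcal L}\mathcal L^{-\alpha/2}f_1$ the paper does the same computation you sketch, organized over annuli via $|\mathcal{P}_{t_{\mathcal{B}}}(x,y)|\le C\,t_{\mathcal{B}}^{n/2}|y-x_0|^{-2n}$ off $2\mathcal{B}$. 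Your weight $\min\{1,(r/|w-x_0|)^N\}$ with $N>\alpha$ does give $Cr^{\alpha}$ uniformly in $z\in 2\mathcal{B}$, so this term is not a real obstacle.

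One caution on your closing remark: the growth $\int_{B(0,R)}|f|\lesssim R^{n-\alpha}$ is exactly borderline and cannot even show that $\mathcal L^{-\alpha/2}f$ is finite. For $\mathcal L=-\Delta$ and $f(x)=|x|^{-\alpha}\in M^{p,n-\alpha p}$ one has $\mathcal{I}_{\alpha}f\equiv+\infty$, so that justification would fail. The paper does not address this well-definedness issue at all. What its argument (and yours) rigorously controls are the local pieces together with the absolutely convergent quantity $(I-e^{-t_{\mathcal{B}}\mathcal L})\mathcal L^{-\alpha/2}f_2$.
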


\begin{theorem}\label{thm2}
Let $0<\alpha<n$ and $1\leq p<n/{\alpha}$. Suppose that $\lambda=n-\alpha p$. Then for any $f\in V{M}^{p,\lambda}(\mathbb R^n)$, there exists a positive constant $C>0$ independent of $f$ such that
\begin{equation*}
\big\|\mathcal L^{-\alpha/2}(f)\big\|_{\mathrm{VMO}_{\mathcal{L}}}
\leq C\big\|f\big\|_{V{M}^{p,\lambda}},
\end{equation*}
and
\begin{equation*}
\lim_{r_{\mathcal{B}}\to 0^{+}}\eta(\mathcal L^{-\alpha/2}(f);r_{\mathcal{B}})=0.
\end{equation*}
Here $\eta(\mathcal L^{-\alpha/2}(f);r_{\mathcal{B}})$ is defined as in \eqref{modulus}.
\end{theorem}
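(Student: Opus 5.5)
Theorem \ref{thm2} will follow from a quantitative local version of the argument for Theorem \ref{thm1}. The idea is to bound the mean oscillation of $\mathcal L^{-\alpha/2}(f)$ over a ball $\mathcal B=B(x_0,r)$ by Morrey quantities of $f$ on balls comparable to $\mathcal B$, plus a tail that is small in $r$. Set $t_{\mathcal B}=r^2$ and split $f=f_1+f_2$ with $f_1=f\chi_{2\mathcal B}$ and $f_2=f\chi_{(2\mathcal B)^c}$. Then
\begin{equation*}
\mathcal L^{-\alpha/2}f-e^{-t_{\mathcal B}\mathcal L}\mathcal L^{-\alpha/2}f=(I-e^{-t_{\mathcal B}\mathcal L})\mathcal L^{-\alpha/2}f_1+(I-e^{-t_{\mathcal B}\mathcal L})\mathcal L^{-\alpha/2}f_2 .
\end{equation*}

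\textbf{Local part.} Bound $|\mathcal L^{-\alpha/2}f_1|$ by \eqref{kernelk}. Since the Gaussian bound \eqref{G} gives $|e^{-t_{\mathcal B}\mathcal L}g|\lesssim \mathcal M g$ (the Hardy--Littlewood maximal function), the second piece $e^{-t_{\mathcal B}\mathcal L}\mathcal L^{-\alpha/2}f_1$ is treated in the same way. It is cleaner to avoid maximal functions and write $e^{-t\mathcal L}\mathcal L^{-\alpha/2}$ directly: its kernel is $\frac1{\Gamma(\alpha/2)}\int_t^\infty \mathcal P_s\,(s-t)^{\alpha/2-1}ds$, which is again $\lesssim|x-y|^{\alpha-n}$. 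So both pieces are dominated by $\mathcal I_\alpha(|f_1|)$.

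By Fubini, for $y\in 2\mathcal B$,
\begin{equation*}
\frac1{m(\mathcal B)}\int_{\mathcal B}\int_{2\mathcal B}\frac{|f(y)|}{|x-y|^{n-\alpha}}\,dy\,dx\lesssim r^{\alpha-n}\int_{2\mathcal B}|f|,
\end{equation*}
because $\int_{\mathcal B}|x-y|^{\alpha-n}dx\lesssim r^\alpha$. Hölder's inequality then gives
\begin{equation*}
r^{\alpha-n}\int_{2\mathcal B}|f|\le r^{\alpha-n}\,m(2\mathcal B)^{1-1/p}\Big(\int_{2\mathcal B}|f|^p\Big)^{1/p}\approx r^{\alpha-n/p}\,r^{\lambda/p}\,\mathcal M_{p,\lambda}(f;x_0,2r)=\mathcal M_{p,\lambda}(f;x_0,2r),
\end{equation*}
using $\lambda=n-\alpha p$. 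For $p=1$ no Hölder step is needed.

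\textbf{Global part.} Apply Lemma \ref{wanglemma2} with $t=r^2$. For $x\in\mathcal B$ and $y\notin2\mathcal B$ we have $|x-y|\approx|x_0-y|$, so the integrand is $\lesssim r^2|x_0-y|^{\alpha-n-2}|f(y)|$. Decompose into annuli $2^{k+1}\mathcal B\setminus2^k\mathcal B$, $k\ge1$, and use Hölder on each annulus:
\begin{equation*}
\lesssim \sum_{k\ge1} r^2(2^kr)^{\alpha-n-2}(2^{k+1}r)^{n(1-1/p)}(2^{k+1}r)^{\lambda/p}\,\mathcal M_{p,\lambda}(f;x_0,2^{k+1}r)\lesssim\sum_{k\ge1}2^{-2k}\,\mathcal M_{p,\lambda}(f;x_0,2^{k+1}r).
\end{equation*}
Together with the local part,
\begin{equation*}
\frac1{m(\mathcal B)}\int_{\mathcal B}\big|\mathcal L^{-\alpha/2}f-e^{-t_{\mathcal B}\mathcal L}\mathcal L^{-\alpha/2}f\big|\lesssim \sum_{k\ge0}2^{-2k}\,\mathcal M_{p,\lambda}(f;x_0,2^{k+1}r).
\end{equation*}
Taking the supremum over all balls gives the $\mathrm{BMO}_{\mathcal L}$ bound with constant $C\|f\|_{M^{p,\lambda}}$. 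This is Theorem \ref{thm1}, and it also gives the first inequality of Theorem \ref{thm2}, since the $\mathrm{VMO}_{\mathcal L}$ norm is the $\mathrm{BMO}_{\mathcal L}$ norm. One must also check that $\mathcal L^{-\alpha/2}f\in\mathcal M$, i.e. that the operator is well defined on $M^{p,\lambda}$; this is only a technical point.

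\textbf{Vanishing.} Fix $N$ and split the sum into $k\le N$ and $k>N$. The head is at most
\begin{equation*}
C\sup_{x_0}\sup_{s\le 2^{N+1}r}\mathcal M_{p,\lambda}(f;x_0,s),
\end{equation*}
and the tail is at most $C2^{-2N}\|f\|_{M^{p,\lambda}}$. Given $\epsilon>0$, choose $N$ so the tail is below $\epsilon$. Then let $r\to0^+$; the head tends to $0$ by the definition of $VM^{p,\lambda}$. Hence $\eta(\mathcal L^{-\alpha/2}f;r)\to0$.

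\textbf{Main obstacle.} The delicate point is the global part: it needs the extra decay factor $t/|x-y|^2$ from Lemma \ref{wanglemma2}, since the plain estimate \eqref{kernelk} would give a divergent sum in the limiting case $\lambda=n-\alpha p$. The summable weights $2^{-2k}$ are also what make the $\epsilon$-splitting in the vanishing argument work.
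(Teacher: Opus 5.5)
Your proposal is correct and is essentially the paper's proof. It uses the same split at $2\mathcal B$, the same $\mathcal I_\alpha$-domination of the local terms yielding $\mathcal M_{p,\lambda}(f;x_0,2r)$, the same application of Lemma \ref{wanglemma2} on dyadic annuli yielding the summable weights $2^{-2k}$, and the same norm inequality inherited from Theorem \ref{thm1}.

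The differences are only cosmetic. You bound $e^{-t_{\mathcal B}\mathcal L}\mathcal L^{-\alpha/2}f_1$ through the composite kernel $\frac{1}{\Gamma(\alpha/2)}\int_t^\infty\mathcal P_s(x,y)(s-t)^{\alpha/2-1}ds\lesssim|x-y|^{\alpha-n}$, where the paper uses a Gaussian-tail annular argument. This bound is indeed uniform in $t$; for $\alpha<2$ it needs the split at $s=2t$, which you did not write out. You then finish with an $\epsilon$--$N$ split of $\sum_{k\ge0}2^{-2k}\mathcal M_{p,\lambda}(f;x_0,2^{k+1}r)$, whereas the paper rewrites the sums as integrals of $\sup_{x_0}\mathcal M_{p,\lambda}(f;x_0,s)$ and applies dominated convergence.
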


In particular, if $\mathcal{L}=-\Delta$ is the Laplacian on $\mathbb R^n$, then we obtain that the classical fractional integral $\mathcal{I}_{\alpha}$ is bounded from ${M}^{p,\lambda}(\mathbb R^n)$ into $\mathrm{BMO}(\mathbb R^n)$. This well-known result was first proved by Adams (see \cite{adams} and \cite{adams1}). We also obtain that $\mathcal{I}_{\alpha}$ is bounded from $V{M}^{p,\lambda}(\mathbb R^n)$ into $\mathrm{VMO}(\mathbb R^n)$. This interesting result was first established by Rafeiro and Samko, to the author's best knowledge (see \cite{Rafeiro}).

Moreover, it was shown that the inclusion relation
\begin{equation*}
L^{p,\infty}(\mathbb R^n)\hookrightarrow M^{q,\lambda}(\mathbb R^n)
\end{equation*}
holds for $1\leq q<p$ and $\lambda=n(1-q/p)$(see \cite[equation (21)]{wang2} and \cite[equation (6.1)]{wang3}). Note that
\begin{equation*}
p=n/{\alpha}\Longrightarrow \lambda=n(1-q/p)=n-\alpha q.
\end{equation*}
Hence, in the special case of $p=n/{\alpha}$ and $0<\alpha<n$, one has
\begin{equation*}
L^{p,\infty}(\mathbb R^n)\hookrightarrow M^{q,\lambda}(\mathbb R^n)
\end{equation*}
with $1\leq q<n/{\alpha}$ and $\lambda=n-\alpha q$. As an immediate consequence of Theorem \ref{thm1}, we have the following estimate for the generalized fractional integral $\mathcal L^{-\alpha/2}$ on weak Lebesgue spaces.
\begin{corollary}\label{thm3}
Let $0<\alpha<n$ and $p=n/{\alpha}$. Then for any $f\in L^{p,\infty}(\mathbb R^n)$, there exists a positive constant $C>0$ independent of $f$ such that
\begin{equation*}
\big\|\mathcal L^{-\alpha/2}(f)\big\|_{\mathrm{BMO}_{\mathcal{L}}}
\leq C\big\|f\big\|_{L^{p,\infty}}.
\end{equation*}
\end{corollary}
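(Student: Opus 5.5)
The corollary will follow directly from Theorem \ref{thm1} combined with the embedding $L^{p,\infty}(\mathbb R^n)\hookrightarrow M^{q,\lambda}(\mathbb R^n)$ quoted just before the statement. With $p=n/\alpha$, I fix any auxiliary exponent $q$ with $1\leq q<n/\alpha$; the simplest choice is $q=1$, which is admissible because $\alpha<n$. I then set $\lambda=n(1-q/p)=n-\alpha q$. This gives $0<\lambda<n$ and places us exactly in the limiting case $\lambda=n-\alpha q$ with $1\leq q<n/\alpha$, as Theorem \ref{thm1} requires.

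The key steps, in order, are as follows. First, I verify the embedding $\|f\|_{M^{q,\lambda}}\leq C\|f\|_{L^{p,\infty}}$. Although this is cited from the literature, it is short to prove. For any ball $\mathcal B$ and any $s>0$, split
\begin{equation*}
\int_{\mathcal B}|f|^q\,dx=q\int_0^\infty s^{q-1}m\big(\{x\in\mathcal B:|f(x)|>s\}\big)\,ds,
\end{equation*}
and bound the distribution function by $\min\{m(\mathcal B),\|f\|_{L^{p,\infty}}^ps^{-p}\}$. Cutting the integral at $s_0=\|f\|_{L^{p,\infty}}m(\mathcal B)^{-1/p}$ and using $q<p$ yields
\begin{equation*}
\int_{\mathcal B}|f|^q\,dx\leq C\|f\|_{L^{p,\infty}}^q\,m(\mathcal B)^{1-q/p}.
\end{equation*}
Since $1-q/p=\lambda/n$, dividing by $m(\mathcal B)^{\lambda/n}$ and taking $q$-th roots gives the claim. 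In particular, $f$ is locally integrable, so $f\in M^{q,\lambda}(\mathbb R^n)$.

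Second, I apply Theorem \ref{thm1} with the exponent $q$ in place of $p$. This gives
\begin{equation*}
\|\mathcal L^{-\alpha/2}(f)\|_{\mathrm{BMO}_{\mathcal L}}\leq C\|f\|_{M^{q,\lambda}}\leq C\|f\|_{L^{p,\infty}}.
\end{equation*}
The constant depends only on $n$, $\alpha$ and the fixed $q$.

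The only point needing care is well-definedness: $\mathcal L^{-\alpha/2}(f)$ must make sense and lie in $\mathcal M$ so that the $\mathrm{BMO}_{\mathcal L}$ norm is meaningful. This is already handled inside Theorem \ref{thm1} for $M^{q,\lambda}$ data, via the pointwise bound \eqref{kernelk}. Since $f\in M^{q,\lambda}$, nothing further is needed, and I expect no genuine obstacle.
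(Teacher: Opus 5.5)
Your proposal is correct and follows the paper's argument: embed $L^{n/\alpha,\infty}(\mathbb R^n)$ into $M^{q,n-\alpha q}(\mathbb R^n)$ for some $1\le q<n/\alpha$, then apply Theorem \ref{thm1} with $q$ in place of $p$; your layer-cake computation, which gives the constant $(p/(p-q))^{1/q}$, supplies the embedding that the paper only cites. One caveat concerns your closing remark: the bound \eqref{kernelk} does not by itself make $\mathcal L^{-\alpha/2}f$ meaningful, because for $f(x)=|x|^{-\alpha}\in L^{n/\alpha,\infty}(\mathbb R^n)$ one has $\mathcal I_\alpha(|f|)\equiv+\infty$ (only differences such as $(I-e^{-t_{\mathcal B}\mathcal L})\mathcal L^{-\alpha/2}(f\chi_{(2\mathcal B)^{\complement}})$ converge absolutely, via Lemma \ref{wanglemma2}), so well-definedness is not settled by that pointwise estimate and is instead inherited from whatever interpretation underlies Theorem \ref{thm1}, exactly as in the paper.
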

In particular, if $\mathcal{L}=-\Delta$ is the Laplacian on $\mathbb R^n$, then we obtain that the classical fractional integral $\mathcal{I}_{\alpha}$ is bounded from $L^{p,\infty}(\mathbb R^n)$ into $\mathrm{BMO}(\mathbb R^n)$. This classical result can be found in  \cite[p.164]{stein}.

\section{Proofs of Theorems $\ref{thm1}$ and $\ref{thm2}$}
In this section, we will give the proofs of our main theorems. We first note that if $\in{M}^{p,\lambda}(\mathbb R^n)$ with $1\leq p<n/{\alpha}$, then by H\"{o}lder's inequality, we see that for any ball $\mathcal{B}$ in $\mathbb R^n$,
\begin{equation}\label{oftenused}
\begin{split}
\int_{\mathcal{B}}|f(y)|dy
&\leq\bigg(\int_{\mathcal{B}}|f(y)|^pdy\bigg)^{1/p}m(\mathcal{B})^{1/{p'}}\\
&\leq\big\|f\big\|_{M^{p,\lambda}}m(\mathcal{B})^{\lambda/{pn}}\cdot m(\mathcal{B})^{1/{p'}},
\end{split}
\end{equation}
where $p'=p/{(p-1)}$ denotes the H\"{o}lder conjugate exponent of $p$, and $p'=\infty$ if $p=1$.

We are now ready to prove Theorem \ref{thm1}.
\begin{proof}[Proof of Theorem $\ref{thm1}$]
Assume that $f\in {M}^{p,\lambda}(\mathbb R^n)$ with $1\leq p<n/{\alpha}$ and $\lambda=n-\alpha p$. By the definition of $\mathrm{BMO}_{\mathcal{L}}(\mathbb R^n)$, it suffices to prove that there exists a constant $C>0$ such that for any ball $\mathcal{B}$ in $\mathbb R^n$,
\begin{equation}\label{main11}
\frac{1}{m(\mathcal{B})}\int_{\mathcal{B}}\big|\mathcal L^{-\alpha/2}f(x)-e^{-t_{\mathcal{B}}\mathcal{L}}
\big(\mathcal L^{-\alpha/2}f\big)(x)\big|dx\leq C\big\|f\big\|_{M^{p,\lambda}}.
\end{equation}
Let $\mathcal{B}=B(x_0,r_{\mathcal{B}})$ be a fixed ball centered at $x_0$ and with radius $r_{\mathcal{B}}$ and $t_{\mathcal{B}}=(r_{\mathcal{B}})^2$. We decompose the function $f(x)$ as follows:
\begin{equation*}
f(x)=f(x)\cdot\chi_{2\mathcal{B}}+f(x)\cdot\chi_{(2\mathcal{B})^{\complement}}:=f_1(x)+f_2(x),
\end{equation*}
where $2\mathcal{B}=B(x_0,2r_{\mathcal{B}})$ and $(2\mathcal{B})^{\complement}=\mathbb R^n\setminus(2\mathcal{B})$. Observe that
\begin{equation*}
\mathcal L^{-\alpha/2}(f)(x)=\mathcal{L}^{-\alpha/2}(f_1)(x)
+\mathcal{L}^{-\alpha/2}(f_2)(x),
\end{equation*}
and
\begin{equation*}
\begin{split}
e^{-t_{\mathcal{B}}\mathcal{L}}\big(\mathcal L^{-\alpha/2}f\big)(x)
&=e^{-t_{\mathcal{B}}\mathcal{L}}\mathcal{L}^{-\alpha/2}(f_1)(x)+
e^{-t_{\mathcal{B}}\mathcal{L}}\mathcal{L}^{-\alpha/2}(f_2)(x).
\end{split}
\end{equation*}
Then we write
\begin{equation*}
\begin{split}
&\frac{1}{m(\mathcal{B})}\int_{\mathcal{B}}\big|\mathcal L^{-\alpha/2}f(x)-e^{-t_{\mathcal{B}}\mathcal{L}}
\big(\mathcal L^{-\alpha/2}f\big)(x)\big|dx\\
&\leq\frac{1}{m(\mathcal{B})}\int_{\mathcal{B}}\big|\mathcal{L}^{-\alpha/2}(f_1)(x)\big|dx
+\frac{1}{m(\mathcal{B})}\int_{\mathcal{B}}\big|e^{-t_{\mathcal{B}}\mathcal{L}}\mathcal{L}^{-\alpha/2}(f_1)(x)\big|dx\\
&+\frac{1}{m(\mathcal{B})}\int_{\mathcal{B}}\big|\mathcal{L}^{-\alpha/2}(f_2)(x)-e^{-t_{\mathcal{B}}\mathcal{L}}\mathcal{L}^{-\alpha/2}(f_2)(x)\big|dx\\
&:=I_1+I_2+I_3.
\end{split}
\end{equation*}

\textbf{Estimation of $I_1$:} Using the previous estimate \eqref{kernelk}, we obtain that for any $x\in \mathcal{B}$,
\begin{equation*}
\begin{split}
\big|\mathcal{L}^{-\alpha/2}(f_1)(x)\big|&\leq\int_{2\mathcal{B}}\big|\mathcal K_{\alpha}(x,y)\big|\cdot|f(y)|dy\\
&\leq C\int_{2\mathcal{B}}\frac{|f(y)|}{|x-y|^{n-\alpha}}dy.
\end{split}
\end{equation*}
This, together with Fubini's theorem, gives us that
\begin{equation*}
\begin{split}
I_1&\leq\frac{C}{m(\mathcal{B})}\int_{\mathcal{B}}\bigg\{\int_{2\mathcal{B}}\frac{|f(y)|}{|x-y|^{n-\alpha}}dy\bigg\}dx\\
&=\frac{C}{m(\mathcal{B})}\int_{2\mathcal{B}}\bigg\{\int_{\mathcal{B}}\frac{1}{|x-y|^{n-\alpha}}dx\bigg\}|f(y)|dy.
\end{split}
\end{equation*}
For any $x\in \mathcal{B}$ and $y\in 2\mathcal{B}$, we have $|x-y|<3r_{\mathcal{B}}$, and hence
\begin{equation}\label{wang1}
\begin{split}
\int_{\mathcal{B}}\frac{1}{|x-y|^{n-\alpha}}dx&=\int_{|x-y|<3r_{\mathcal{B}}}\frac{1}{|x-y|^{n-\alpha}}dx\\
&=\int_0^{3r_{\mathcal{B}}}\int_{\mathbf{S}^{n-1}}\frac{\varrho^{n-1}}{\varrho^{n-\alpha}}\,d\sigma(x')d\varrho\\
&\leq C(r_{\mathcal{B}})^{\alpha}\leq Cm(\mathcal{B})^{\alpha/n}.
\end{split}
\end{equation}
Here $\mathbf{S}^{n-1}:=\{x\in\mathbb R^n:|x|=1\}$ denotes the unit sphere in $\mathbb R^n$ ($n\geq2$) equipped with the normalized Lebesgue measure $d\sigma(x')$ and $x'$ denotes the unit vector in the direction of $x$.
Notice that when $\lambda=n-\alpha p$,
\begin{equation}\label{cal1}
\frac{\lambda}{pn}+\frac{1}{p'}=\frac{\,1\,}{p}-\frac{\alpha}{\,n\,}+\frac{1}{p'}=1-\frac{\alpha}{\,n\,}.
\end{equation}
Consequently, by using the estimates \eqref{wang1} and \eqref{oftenused}, we obtain
\begin{equation*}
\begin{split}
I_1&\leq\frac{C}{m(\mathcal{B})^{1-\alpha/n}}\int_{2\mathcal{B}}|f(y)|dy\\
&\leq\frac{C}{m(\mathcal{B})^{1-\alpha/n}}\big\|f\big\|_{M^{p,\lambda}}m(2\mathcal{B})^{\lambda/{pn}}\cdot m(2\mathcal{B})^{1/{p'}}\\
&\leq C\big\|f\big\|_{M^{p,\lambda}}.
\end{split}
\end{equation*}

\textbf{Estimation of $I_2$:} The kernel of $e^{-t_{\mathcal{B}}\mathcal{L}}$ is denoted by $\mathcal{P}_{t_{\mathcal{B}}}(x,y)$. It is clear that for any $x\in \mathcal{B}$ and $y\in 2\mathcal{B}$, we have
\begin{equation*}
\big|\mathcal{P}_{t_{\mathcal{B}}}(x,y)\big|\leq\frac{C}{(t_{\mathcal{B}})^{n/2}}\leq\frac{C}{m(2\mathcal{B})}
\end{equation*}
by \eqref{G}. Here $t_{\mathcal{B}}=(r_{\mathcal{B}})^2$. Moreover, for any $x\in \mathcal{B}$ and $y\in(2\mathcal{B})^{\complement}$, we have
\begin{equation*}
|x-y|\geq\frac{|y-x_0|}{2}\geq r_{\mathcal{B}}.
\end{equation*}
Consequently, in this case,
\begin{equation*}
\big|\mathcal{P}_{t_{\mathcal{B}}}(x,y)\big|\leq C\cdot\frac{(t_{\mathcal{B}})^{n/2}}{|x-y|^{2n}}
\leq C\cdot\frac{(t_{\mathcal{B}})^{n/2}}{|y-x_0|^{2n}}
\end{equation*}
by using \eqref{G} again.
We then decompose $\mathbb R^n$ into a geometrically increasing sequence of concentric balls, and obtain that for any $x\in \mathcal{B}$,
\begin{equation*}
\begin{split}
&\big|e^{-t_{\mathcal{B}}\mathcal{L}}\mathcal{L}^{-\alpha/2}(f_1)(x)\big|\\
&=\bigg|\int_{\mathbb R^n}\mathcal{P}_{t_{\mathcal{B}}}(x,y)\cdot\mathcal{L}^{-\alpha/2}(f_1)(y)dy\bigg|\\
&\leq \int_{2\mathcal{B}}\big|\mathcal{P}_{t_{\mathcal{B}}}(x,y)\big|\cdot\big|\mathcal{L}^{-\alpha/2}(f_1)(y)\big|dy
+\sum_{k=1}^{\infty}\int_{2^{k+1}\mathcal{B}\setminus 2^k \mathcal{B}}\big|\mathcal{P}_{t_{\mathcal{B}}}(x,y)\big|\cdot\big|\mathcal{L}^{-\alpha/2}(f_1)(y)\big|dy\\
&\leq\frac{C}{m(2\mathcal{B})}\int_{2\mathcal{B}}\big|\mathcal{L}^{-\alpha/2}(f_1)(y)\big|dy
+C\sum_{k=1}^{\infty}\int_{2^{k+1}\mathcal{B}\setminus 2^k \mathcal{B}}\frac{(t_{\mathcal{B}})^{n/2}}{|y-x_0|^{2n}}\cdot\big|\mathcal{L}^{-\alpha/2}(f_1)(y)\big|dy\\
&\leq\frac{C}{m(2\mathcal{B})}\int_{2\mathcal{B}}\big|\mathcal{L}^{-\alpha/2}(f_1)(y)\big|dy
+C\sum_{k=1}^{\infty}\frac{1}{2^{kn}}\cdot\frac{1}{m(2^{k+1}\mathcal{B})}\int_{2^{k+1}\mathcal{B}}\big|\mathcal{L}^{-\alpha/2}(f_1)(y)\big|dy.
\end{split}
\end{equation*}
From the above pointwise estimate, it follows that
\begin{equation*}
\begin{split}
I_2&=\frac{1}{m(\mathcal{B})}\int_{\mathcal{B}}\big|e^{-t_{\mathcal{B}}\mathcal{L}}\mathcal{L}^{-\alpha/2}(f_1)(x)\big|dx\\
&\leq\frac{C}{m(2\mathcal{B})}\int_{2\mathcal{B}}\big|\mathcal{L}^{-\alpha/2}(f_1)(y)\big|dy
+C\sum_{k=1}^{\infty}\frac{1}{2^{kn}}\cdot\frac{1}{m(2^{k+1}\mathcal{B})}\int_{2^{k+1}\mathcal{B}}\big|\mathcal{L}^{-\alpha/2}(f_1)(y)\big|dy\\
&:=I_2'+I_2''.
\end{split}
\end{equation*}
Using the same arguments as in the estimate of $I_1$, we can also deduce that
\begin{equation*}
I_2'\leq C\big\|f\big\|_{M^{p,\lambda}}.
\end{equation*}
On the other hand, for any $k\in \mathbb{N}$, it follows from Fubini's theorem and the kernel estimate \eqref{kernelk} that
\begin{equation*}
\begin{split}
&\frac{1}{m(2^{k+1}\mathcal{B})}\int_{2^{k+1}\mathcal{B}}\big|\mathcal{L}^{-\alpha/2}(f_1)(y)\big|dy\\
&\leq \frac{C}{m(2^{k+1}\mathcal{B})}\int_{2^{k+1}\mathcal{B}}\bigg\{\int_{2\mathcal{B}}\frac{|f(z)|}{|y-z|^{n-\alpha}}dz\bigg\}dy\\
&=\frac{C}{m(2^{k+1}\mathcal{B})}\int_{2\mathcal{B}}\bigg\{\int_{2^{k+1}\mathcal{B}}\frac{1}{|y-z|^{n-\alpha}}dy\bigg\}|f(z)|dz.
\end{split}
\end{equation*}
For any $y\in 2^{k+1}\mathcal{B}$ and $z\in 2\mathcal{B}$, one has $|y-z|\leq 2r_{\mathcal{B}}+2^{k+1}r_{\mathcal{B}}\leq 2^{k+2}r_{\mathcal{B}}$. Thus
\begin{equation}\label{wang2}
\begin{split}
\int_{2^{k+1}\mathcal{B}}\frac{1}{|y-z|^{n-\alpha}}dy
&=\int_{|y-z|<2^{k+2}r_{\mathcal{B}}}\frac{1}{|y-z|^{n-\alpha}}dy\\
&=\int_0^{2^{k+2}r_{\mathcal{B}}}\int_{\mathbf{S}^{n-1}}\frac{\varrho^{n-1}}{\varrho^{n-\alpha}}\,d\sigma(y')d\varrho\\
&\leq C\big(2^{k+2}r_{\mathcal{B}}\big)^{\alpha}\leq Cm(2^{k+1}\mathcal{B})^{\alpha/n}.
\end{split}
\end{equation}
Then we get
\begin{equation*}
\begin{split}
I_2''\leq C\sum_{k=1}^{\infty}\frac{1}{2^{kn}}\cdot\frac{1}{m(2^{k+1}\mathcal{B})^{1-\alpha/n}}\int_{2\mathcal{B}}|f(z)|dz.
\end{split}
\end{equation*}
Moreover, by \eqref{oftenused} and \eqref{cal1},
\begin{equation*}
\begin{split}
I_2''&\leq C\sum_{k=1}^{\infty}\frac{1}{2^{kn}}\cdot\frac{1}{m(2^{k+1}\mathcal{B})^{1-\alpha/n}}   \big\|f\big\|_{M^{p,\lambda}}m(2\mathcal{B})^{\lambda/{pn}}\cdot m(2\mathcal{B})^{1/{p'}}\\
&\leq C\sum_{k=1}^{\infty}\frac{1}{2^{kn}}\cdot\frac{m(2\mathcal{B})^{1-\alpha/n}}{m(2^{k+1}\mathcal{B})^{1-\alpha/n}}
\big\|f\big\|_{M^{p,\lambda}}\\
&\leq C\big\|f\big\|_{M^{p,\lambda}},
\end{split}
\end{equation*}
where the last inequality follows from the fact that $1-\alpha/n>0$. Summing up the above estimates for $I_2'$ and $I_2''$, we have
\begin{equation*}
I_2\leq C\big\|f\big\|_{M^{p,\lambda}}.
\end{equation*}

\textbf{Estimation of $I_3$:} Note that if $x\in\mathcal{B}$ and $y\in 2^{k+1}\mathcal{B}\setminus 2^k \mathcal{B}$ with $k\in \mathbb{N}$, then
\begin{equation*}
|y-x|\approx|y-x_0|.
\end{equation*}
This fact, together with Lemma \ref{wanglemma2}, implies that for any $x\in \mathcal{B}$,
\begin{equation*}
\begin{split}
&\Big|\mathcal{L}^{-\alpha/2}(f_2)(x)
-e^{-t_{\mathcal{B}}\mathcal{L}}\mathcal{L}^{-\alpha/2}(f_2)(x)\Big|\\
&=\Big|\big(I-e^{-t_{\mathcal{B}}\mathcal{L}}\big)\mathcal{L}^{-\alpha/2}(f_2)(x)\Big|\\
&\leq\int_{(2\mathcal{B})^{\complement}}\big|\widetilde{K}_{\alpha,t_{\mathcal{B}}}(x,y)\big|
\cdot\big|f(y)\big|dy\\
&\leq C\sum_{k=1}^{\infty}\int_{2^{k+1}\mathcal{B}\setminus 2^k \mathcal{B}}
\frac{1}{|x-y|^{n-\alpha}}\cdot\frac{(r_{\mathcal{B}})^2}{|x-y|^{2}}\big|f(y)\big|dy\\
&\leq C\sum_{k=1}^{\infty}\frac{1}{2^{2k}}\cdot\frac{1}{m(2^{k+1}\mathcal{B})^{1-\alpha/n}}
\int_{2^{k+1}\mathcal{B}}\big|f(y)\big|dy.
\end{split}
\end{equation*}
Applying the above pointwise estimate, we thus obtain
\begin{equation*}
\begin{split}
I_3&=\frac{1}{m(\mathcal{B})}\int_{\mathcal{B}}
\big|\mathcal{L}^{-\alpha/2}(f_2)(x)-e^{-t_{\mathcal{B}}\mathcal{L}}\mathcal{L}^{-\alpha/2}(f_2)(x)\big|dx\\
&\leq C\sum_{k=1}^{\infty}\frac{1}{2^{2k}}\cdot\frac{1}{m(2^{k+1}\mathcal{B})^{1-\alpha/n}}
\int_{2^{k+1}\mathcal{B}}\big|f(y)\big|dy.
\end{split}
\end{equation*}
Therefore, by using \eqref{oftenused} and \eqref{cal1},
\begin{equation*}
\begin{split}
I_3&\leq C\sum_{k=1}^{\infty}\frac{1}{2^{2k}}\cdot\frac{1}{m(2^{k+1}\mathcal{B})^{1-\alpha/n}}
\big\|f\big\|_{M^{p,\lambda}}m(2^{k+1}\mathcal{B})^{\lambda/{pn}}\cdot m(2^{k+1}\mathcal{B})^{1/{p'}}\\
&=C\sum_{k=1}^{\infty}\frac{1}{2^{2k}}\big\|f\big\|_{M^{p,\lambda}}\leq C\big\|f\big\|_{M^{p,\lambda}},
\end{split}
\end{equation*}
where the last series is convergent. Combining the above estimates $I_1$, $I_2$ with $I_3$, we obtain the desired result \eqref{main11}, and hence the proof of Theorem \ref{thm1} is complete.
\end{proof}
Let us now turn to prove Theorem $\ref{thm2}$.
\begin{proof}[Proof of Theorem $\ref{thm2}$]
Let $f\in V{M}^{p,\lambda}(\mathbb R^n)$ with $1\leq p<n/{\alpha}$ and $\lambda=n-\alpha p$. Since we endow $\mathrm{VMO}_{\mathcal{L}}(\mathbb R^n)$ with the norm of $\mathrm{BMO}_{\mathcal{L}}(\mathbb R^n)$, by Theorem \ref{thm1}, we obtain
\begin{equation*}
\big\|\mathcal L^{-\alpha/2}(f)\big\|_{\mathrm{VMO}_{\mathcal{L}}}
\leq C\big\|f\big\|_{V{M}^{p,\lambda}},
\end{equation*}
where $C>0$ is independent of $f$. We need only to show that
\begin{equation}\label{main22}
\lim_{r_{\mathcal{B}}\to 0^{+}}\eta(\mathcal L^{-\alpha/2}(f);r_{\mathcal{B}})=0.
\end{equation}
Let $\mathcal{B}=B(x_0,r_{\mathcal{B}})$ be a ball centered at $x_0$ and with radius $r_{\mathcal{B}}$. As before, we decompose the function $f(x)$ as follows:
\begin{equation*}
f(x)=f(x)\cdot\chi_{2\mathcal{B}}+f(x)\cdot\chi_{(2\mathcal{B})^{\complement}}:=f_1(x)+f_2(x),
\end{equation*}
where $2\mathcal{B}=B(x_0,2r_{\mathcal{B}})$ and $(2\mathcal{B})^{\complement}=\mathbb R^n\setminus(2\mathcal{B})$. Thus,
\begin{equation*}
\mathcal L^{-\alpha/2}(f)(x)=\mathcal{L}^{-\alpha/2}(f_1)(x)
+\mathcal{L}^{-\alpha/2}(f_2)(x),
\end{equation*}
and
\begin{equation*}
\begin{split}
e^{-t_{\mathcal{B}}\mathcal{L}}\big(\mathcal L^{-\alpha/2}f\big)(x)
&=e^{-t_{\mathcal{B}}\mathcal{L}}\mathcal{L}^{-\alpha/2}(f_1)(x)+
e^{-t_{\mathcal{B}}\mathcal{L}}\mathcal{L}^{-\alpha/2}(f_2)(x),
\end{split}
\end{equation*}
where $t_{\mathcal{B}}=(r_{\mathcal{B}})^2$ and $r_{\mathcal{B}}$ is the radius of the ball $\mathcal{B}$. Then $\eta(\mathcal L^{-\alpha/2}(f);r_{\mathcal{B}})$ can be written as
\begin{equation*}
\begin{split}
&\eta(\mathcal L^{-\alpha/2}(f);r_{\mathcal{B}})\\
&\leq \sup_{x_0\in\mathbb R^n}\frac{1}{m(\mathcal{B})}\int_{\mathcal{B}}\big|\mathcal{L}^{-\alpha/2}(f_1)(x)\big|dx
+\sup_{x_0\in\mathbb R^n}\frac{1}{m(\mathcal{B})}\int_{\mathcal{B}}\big|e^{-t_{\mathcal{B}}\mathcal{L}}\mathcal{L}^{-\alpha/2}(f_1)(x)\big|dx\\
&+\sup_{x_0\in\mathbb R^n}\frac{1}{m(\mathcal{B})}
\int_{\mathcal{B}}\big|\mathcal L^{-\alpha/2}(f_2)(x)-e^{-t_{\mathcal{B}}\mathcal{L}}\mathcal L^{-\alpha/2}(f_2)(x)\big|dx\\
&=J_1(r_{\mathcal{B}})+J_2(r_{\mathcal{B}})+J_3(r_{\mathcal{B}}).
\end{split}
\end{equation*}

\textbf{Estimation of $J_1$:} By using the estimates \eqref{kernelk} and \eqref{wang1}, we have
\begin{equation*}
\begin{split}
J_1(r_{\mathcal{B}})&\leq\sup_{x_0\in\mathbb R^n}\frac{C}{m(\mathcal{B})}\int_{\mathcal{B}}\bigg\{\int_{2\mathcal{B}}\frac{|f(y)|}{|x-y|^{n-\alpha}}dy\bigg\}dx\\
&\leq\sup_{x_0\in\mathbb R^n}\frac{C}{m(\mathcal{B})^{1-\alpha/n}}
\int_{2\mathcal{B}}|f(y)|dy.
\end{split}
\end{equation*}
Applying H\"older's inequality, we obtain
\begin{equation*}
\begin{split}
\frac{1}{m(\mathcal{B})}\int_{2\mathcal{B}}|f(y)|\,dy
&\leq\frac{1}{m(\mathcal{B})}\bigg(\int_{2\mathcal{B}}|f(y)|^p\,dy\bigg)^{1/p}m(2\mathcal{B})^{1/{p'}}\\
&\leq\frac{C}{m(\mathcal{B})^{1/p}}\big\|f\big\|_{L^p(B(x_0,2r_{\mathcal{B}}))}.
\end{split}
\end{equation*}
Since $s\mapsto\|f\|_{L^p(B(x_0,s))}$ is an increasing function and $m(\mathcal{B})^{\alpha/n}\approx(r_{\mathcal{B}})^{\alpha}$, it is easy to see that
\begin{equation*}
\begin{split}
J_1(r_{\mathcal{B}})
&\leq \sup_{x_0\in\mathbb R^n}C(r_{\mathcal{B}})^{\alpha}\int_{2r_{\mathcal{B}}}^{+\infty}\big\|f\big\|_{L^p(B(x_0,s))}\frac{ds}{s^{n/p+1}}\\
&\leq \sup_{x_0\in\mathbb R^n}C(r_{\mathcal{B}})^{\alpha}\int_{2r_{\mathcal{B}}}^{+\infty}\mathcal{M}_{p,\lambda}(f;x_0,s)\cdot m(B(x_0,s))^{\lambda/{pn}}\frac{ds}{s^{n/p+1}}.\\
\end{split}
\end{equation*}
Note that when $\lambda=n-\alpha p$,
\begin{equation}\label{wang5}
\frac{\,n\,}{p}-\frac{\lambda}{\,p\,}=\alpha,
\end{equation}
and $m(B(x_0,s))^{\lambda/{pn}}\approx s^{\lambda/p}$. Therefore,
\begin{equation*}
\begin{split}
J_1(r_{\mathcal{B}})
&\leq C(r_{\mathcal{B}})^{\alpha}\int_{2r_{\mathcal{B}}}^{+\infty}
\sup_{x_0\in\mathbb R^n}\mathcal{M}_{p,\lambda}(f;x_0,s)\frac{ds}{s^{\alpha+1}}.
\end{split}
\end{equation*}
By a change of variables (here we take $\tau:=s/{2r_{\mathcal{B}}}$), we further obtain
\begin{equation*}
J_1(r_{\mathcal{B}})\leq C\int_1^{+\infty}\sup_{x_0\in\mathbb R^n}\mathcal{M}_{p,\lambda}(f;x_0,2r_{\mathcal{B}}\tau)\frac{d\tau}{\tau^{\alpha+1}}.
\end{equation*}
Since
\begin{equation*}
\sup_{x_0\in\mathbb R^n}\mathcal{M}_{p,\lambda}(f;x_0,2r_{\mathcal{B}}\tau)
\leq \sup_{x_0\in \mathbb R^n,r>0}\mathcal{M}_{p,\lambda}(f;x_0,r)=\big\|f\big\|_{{M}^{p,\lambda}}=\big\|f\big\|_{VM^{p,\lambda}},
\end{equation*}
and $\alpha+1>1$, the following integral is convergent:
\begin{equation*}
\int_1^{+\infty}\frac{d\tau}{\tau^{\alpha+1}}<+\infty.
\end{equation*}
Hence,
\begin{equation*}
\lim_{r_{\mathcal{B}}\to 0^{+}}J_1(r_{\mathcal{B}})=0
\end{equation*}
holds by the Lebesgue dominated convergence theorem.

\textbf{Estimation of $J_2$:} From the estimate \eqref{G}, it follows that
\begin{equation*}
\begin{split}
J_2(r_{\mathcal{B}})
&\leq\sup_{x_0\in\mathbb R^n}\frac{1}{m(\mathcal{B})}\int_{\mathcal{B}}
\bigg\{\int_{2\mathcal{B}}\big|\mathcal{P}_{t_{\mathcal{B}}}(x,y)\big|\cdot\big|\mathcal{L}^{-\alpha/2}(f_1)(y)\big|dy\bigg\}dx\\
&+\sup_{x_0\in\mathbb R^n}\frac{1}{m(\mathcal{B})}\int_{\mathcal{B}}
\bigg\{\sum_{k=1}^{\infty}\int_{2^{k+1}\mathcal{B}\setminus 2^k \mathcal{B}}\big|\mathcal{P}_{t_{\mathcal{B}}}(x,y)\big|\cdot\big|\mathcal{L}^{-\alpha/2}(f_1)(y)\big|dy\bigg\}dx\\
&\leq\sup_{x_0\in\mathbb R^n}\frac{C}{m(2\mathcal{B})}\int_{2\mathcal{B}}\big|\mathcal{L}^{-\alpha/2}(f_1)(y)\big|dy\\
&+\sup_{x_0\in\mathbb R^n}
C\sum_{k=1}^{\infty}\frac{1}{2^{kn}}\cdot\frac{1}{m(2^{k+1}\mathcal{B})}\int_{2^{k+1}\mathcal{B}}\big|\mathcal{L}^{-\alpha/2}(f_1)(y)\big|dy\\
&:=J_2'(r_{\mathcal{B}})+J_2''(r_{\mathcal{B}}).
\end{split}
\end{equation*}
Similarly to the estimate of $J_1(r_{\mathcal{B}})$, we have
\begin{equation*}
\lim_{r_{\mathcal{B}}\to 0^{+}}J_2'(r_{\mathcal{B}})=0.
\end{equation*}
On the other hand, from \eqref{kernelk} and \eqref{wang2}, it follows that
\begin{equation*}
\begin{split}
J_2''(r_{\mathcal{B}})&\leq \sup_{x_0\in\mathbb R^n}
C\sum_{k=1}^{\infty}\frac{1}{2^{kn}}\cdot\frac{1}{m(2^{k+1}\mathcal{B})}\int_{2^{k+1}\mathcal{B}}  \bigg\{\int_{2\mathcal{B}}\frac{|f(z)|}{|y-z|^{n-\alpha}}dz\bigg\}dy\\
&\leq \sup_{x_0\in\mathbb R^n}
C\sum_{k=1}^{\infty}\frac{1}{2^{kn}}\cdot\frac{1}{m(2^{k+1}\mathcal{B})^{1-\alpha/n}}\int_{2\mathcal{B}}|f(z)|dz.
\end{split}
\end{equation*}
Moreover, by H\"{o}lder's inequality and the fact that $2\mathcal{B}\subset 2^{k+1}\mathcal{B}$ with $k\in \mathbb{N}$, we get
\begin{equation*}
\begin{split}
\frac{1}{m(2^{k+1}\mathcal{B})}\int_{2\mathcal{B}}|f(y)|\,dy
&\leq\frac{1}{m(2^{k+1}\mathcal{B})}\bigg(\int_{2\mathcal{B}}|f(y)|^p\,dy\bigg)^{1/p}m(2\mathcal{B})^{1/{p'}}\\
&\leq\frac{1}{m(2^{k+1}\mathcal{B})^{1/p}}\big\|f\big\|_{L^p(B(x_0,2r_{\mathcal{B}}))}.
\end{split}
\end{equation*}
Then we have
\begin{equation*}
J_2''(r_{\mathcal{B}})\leq \sup_{x_0\in\mathbb R^n}
C\sum_{k=1}^{\infty}\frac{1}{2^{kn}}\cdot\frac{1}{m(2^{k+1}\mathcal{B})^{1/p-\alpha/n}}\big\|f\big\|_{L^p(B(x_0,2r_{\mathcal{B}}))}.
\end{equation*}
Note that $1/p-\alpha/n>0$. Hence,
\begin{equation*}
\begin{split}
J_2''(r_{\mathcal{B}})&\leq \sup_{x_0\in\mathbb R^n}
C\sum_{k=1}^{\infty}\frac{1}{2^{kn}}\cdot\frac{1}{m(2\mathcal{B})^{1/p-\alpha/n}}\big\|f\big\|_{L^p(B(x_0,2r_{\mathcal{B}}))}\\
&\leq \sup_{x_0\in\mathbb R^n} C(r_{\mathcal{B}})^{\alpha}\int_{2r_{\mathcal{B}}}^{+\infty}\big\|f\big\|_{L^p(B(x_0,s))}\frac{ds}{s^{n/p+1}},
\end{split}
\end{equation*}
since $\|f\|_{L^p(B(x_0,s))}$ is an increasing function with respect to $s$. Arguing as in the proof of $J_1$, we can also obtain
\begin{equation*}
\lim_{r_{\mathcal{B}}\to 0^{+}}J_2''(r_{\mathcal{B}})=0.
\end{equation*}

\textbf{Estimation of $J_3$:} By Lemma \ref{wanglemma2}, we can deduce that
\begin{equation*}
\begin{split}
J_3(r_{\mathcal{B}})&\leq \sup_{x_0\in\mathbb R^n}\frac{1}{m(\mathcal{B})}
\int_{\mathcal{B}}\bigg\{\int_{(2\mathcal{B})^{\complement}}\big|\widetilde{K}_{\alpha,t_{\mathcal{B}}}(x,y)\big|\cdot\big|f(y)\big|dy\bigg\}dx\\
&\leq\sup_{x_0\in\mathbb R^n}\frac{C}{m(\mathcal{B})}
\int_{\mathcal{B}}\bigg\{\sum_{k=1}^{\infty}\int_{2^{k+1}\mathcal{B}\setminus 2^k \mathcal{B}}
\frac{1}{|x-y|^{n-\alpha}}\cdot\frac{(r_{\mathcal{B}})^2}{|x-y|^{2}}\big|f(y)\big|dy\bigg\}dx\\
&\leq \sup_{x_0\in\mathbb R^n}C\sum_{k=1}^{\infty}\frac{1}{2^{2k}}\cdot\frac{1}{m(2^{k+1}\mathcal{B})^{1-\alpha/n}}
\int_{2^{k+1}\mathcal{B}}\big|f(y)\big|dy.
\end{split}
\end{equation*}
Moreover, by H\"{o}lder's inequality, we obtain that for each $k\in \mathbb{N}$,
\begin{equation*}
\begin{split}
\frac{1}{m(2^{k+1}\mathcal{B})}\int_{2^{k+1}\mathcal{B}}|f(y)|\,dy
&\leq\frac{1}{m(2^{k+1}\mathcal{B})}\bigg(\int_{2^{k+1}\mathcal{B}}|f(y)|^p\,dy\bigg)^{1/p}m(2^{k+1}\mathcal{B})^{1/{p'}}\\
&=\frac{1}{m(2^{k+1}\mathcal{B})^{1/p}}\big\|f\big\|_{L^p(B(x_0,2^{k+1}r_{\mathcal{B}}))}.
\end{split}
\end{equation*}
From this, it follows that
\begin{equation*}
J_3(r_{\mathcal{B}})\leq\sup_{x_0\in\mathbb R^n}C\sum_{k=1}^{\infty}\frac{1}{2^{2k}}\cdot\frac{1}{m(2^{k+1}\mathcal{B})^{1/p-\alpha/n}}
\big\|f\big\|_{L^p(B(x_0,2^{k+1}r_{\mathcal{B}}))}.
\end{equation*}
Since $s\mapsto\|f\|_{L^p(B(x_0,s))}$ is an increasing function, we have
\begin{equation}\label{wang6}
\begin{split}
\int_{4r_{\mathcal{B}}}^{+\infty}\frac{\|f\|_{L^p(B(x_0,s))}}{s^{n/p-\alpha+3}}ds
&=\sum_{k=1}^{\infty}\int_{2^{k+1}r_{\mathcal{B}}}^{2^{k+2}r_{\mathcal{B}}}\frac{\|f\|_{L^p(B(x_0,s))}}{s^{n/p-\alpha+3}}ds\\
&\geq\sum_{k=1}^{\infty}\int_{2^{k+1}r_{\mathcal{B}}}^{2^{k+2}r_{\mathcal{B}}}
\big\|f\big\|_{L^p(B(x_0,2^{k+1}r_{\mathcal{B}}))}\frac{ds}{s^{n/p-\alpha+3}}\\
&\geq C\sum_{k=1}^{\infty}\big\|f\big\|_{L^p(B(x_0,2^{k+1}r_{\mathcal{B}}))}\cdot\frac{1}{(2^{k+1}r_{\mathcal{B}})^{n/p-\alpha+2}}\\
&\geq \frac{C}{(r_{\mathcal{B}})^2}\sum_{k=1}^{\infty}\big\|f\big\|_{L^p(B(x_0,2^{k+1}r_{\mathcal{B}}))}\cdot
\frac{1}{2^{2k}}\cdot\frac{1}{m(2^{k+1}\mathcal{B})^{1/p-\alpha/n}}.
\end{split}
\end{equation}
Therefore, by \eqref{wang5}, \eqref{wang6} and the definition of Morrey norm, we deduce that
\begin{align*}
J_3(r_{\mathcal{B}})&\leq \sup_{x_0\in\mathbb R^n}
C(r_{\mathcal{B}})^2\int_{4r_{\mathcal{B}}}^{+\infty}\frac{\|f\|_{L^p(B(x_0,s))}}{s^{n/p-\alpha+3}}ds\\
&\leq \sup_{x_0\in\mathbb R^n}
C(r_{\mathcal{B}})^2\int_{4r_{\mathcal{B}}}^{+\infty}\frac{\mathcal{M}_{p,\lambda}(f;x_0,s)\cdot s^{{\lambda}/p}}{s^{n/p-\alpha+3}}ds\\
&\leq C(r_{\mathcal{B}})^2\int_{4r_{\mathcal{B}}}^{+\infty}\sup_{x_0\in\mathbb R^n}\mathcal{M}_{p,\lambda}(f;x_0,s)\frac{ds}{s^3}.
\end{align*}
Observe that by a change of variables (here we take $\tau:=s/{4r_{\mathcal{B}}}$),
\begin{equation*}
(r_{\mathcal{B}})^2\int_{4r_{\mathcal{B}}}^{+\infty}\sup_{x_0\in\mathbb R^n}\mathcal{M}_{p,\lambda}(f;x_0,s)\frac{ds}{s^3}
=\int_1^{+\infty}\sup_{x_0\in\mathbb R^n}\mathcal{M}_{p,\lambda}(f;x_0,4r_{\mathcal{B}}\tau)\frac{d\tau}{\tau^3}.
\end{equation*}
Since for any $r_{\mathcal{B}}>0$,
\begin{equation*}
\sup_{x_0\in\mathbb R^n}\mathcal{M}_{p,\lambda}(f;x_0,4r_{\mathcal{B}}\tau)
\leq\sup_{x_0\in \mathbb R^n,r>0}\mathcal{M}_{p,\lambda}(f;x_0,r)=\big\|f\big\|_{{M}^{p,\lambda}}
=\big\|f\big\|_{VM^{p,\lambda}},
\end{equation*}
and the following integral is convergent:
\begin{equation*}
\int_1^{+\infty}\frac{d\tau}{\tau^3}<+\infty.
\end{equation*}
Finally, by using the Lebesgue dominated convergence theorem, we conclude that
\begin{equation*}
\lim_{r_{\mathcal{B}}\to 0^{+}}J_3(r_{\mathcal{B}})=0.
\end{equation*}
Combining the estimates of $J_1$, $J_2$ with $J_3$, we get the desired result \eqref{main22}. Hence, the proof of Theorem \ref{thm2} is complete.
\end{proof}

\section*{Acknowledgments}
The author was partially supported by the grant of Xiangnan University Scientific Research Project ``Real-Variable Theory of Function Spaces and Its Applications".

\bibliographystyle{amsplain}

\end{document}